\newtheorem{thm}{Theorem}
\newtheorem{lem}[thm]{Lemma}
\newtheorem{prop}[thm]{Proposition}
\newtheorem{cor}[thm]{Corollary}
\newtheorem{claim}{Claim}
\theoremstyle{remark}
\newtheorem{rem}[thm]{Remark}
\newtheorem{exam}[thm]{Example}
\newlength{\vbraceheight}
\def\vbig#1{{\resizebox{!}{\vbraceheight}{$\left#1\vbox to\vbraceheight{}\right.\n@space$}}}
\def\vbigl{\mathopen\vbig}
\def\vbigr{\mathclose\vbig}
\numberwithin{equation}{section}
\newcommand{\spin}{\ifmmode{\rm Spin}\else{${\rm spin}$\ }\fi}
\newcommand{\spinc}{\ifmmode{{\rm Spin}^c}\else{${\rm spin}^c$}\fi}
\newcommand{\Z}{\mathbb{Z}}
\newcommand{\Q}{\mathbb{Q}}
\newcommand{\vol}{\mathrm{vol}}
\DeclareMathOperator{\lk}{link}
\title{Characterizing slopes for the $(-2,3,7)$-pretzel knot}
\author{Duncan McCoy}
\address {Université du Québec à Montréal}
\email{mc\_coy.duncan@uqam.ca}
\date{}
\begin{document}

\begin{abstract}
In this note we exhibit concrete examples of characterizing slopes for the knot $12n242$, aka the $(-2,3,7)$-pretzel knot. Although it was shown by Lackenby that every knot admits infinitely many characterizing slopes, the non-constructive nature of the proof means that there are very few hyperbolic knots for which explicit examples of characterizing slopes are known.
\end{abstract}
\maketitle

\section{Introduction}
Given a knot $K\subseteq S^3$, we say that $p/q\in \Q$ is a {\em characterizing slope} for $K$ if the oriented homeomorphism type of the manifold obtained by $p/q$-surgery on $K$ determines $K$ uniquely. That is, $p/q$ is a characterizing slope for $K$ there does not exist any knot $K'\neq K$ such that  $S^3_{p/q}(K)\cong S^3_{p/q}(K')$. It was shown by Lackenby that every knot admits infinitely many characterizing slopes and for a hyperbolic knot any slope $p/q$ with $q$ sufficiently large is characterizing \cite{Lackenby2019characterizing}. Although these results show the existence of characterizing slopes, the proofs are non-constructive and so there are very few hyperbolic knots for which explicit examples of characterizing slopes are known. Ozsv\'ath and Szab\'o have shown every slope is characterizing for the figure-eight knot $4_1$ \cite{Ozsvath2006trefoilsurgery} and recent work of Baldwin and Sivek implies that every non-integer slope is characterizing for $5_2$ \cite{Baldwin52}. The aim of this article is to exhibit explicit examples of characterizing slopes for the knot $12n242$, also known as the $(-2,3,7)$-pretzel knot. Since $12n242$ is a hyperbolic $L$-space knot ---Fintushel and Stern showed that it admits two lens space surgeries \cite{Fintushel1980lenssurgery}---  it has only finitely many non-characterizing slopes that are not negative integers \cite{McCoy2018hyp_char}. The following theorem is a quantitative version of this fact. As far as the author is aware, these are the first known explicit examples of characterizing slopes on a hyperbolic knot with genus greater than one.

\begin{thm}\label{thm:12n242}
Any slope $p/q$ satisfying at least one of the following conditions is a characterizing slope for $12n242$:
\begin{enumerate}[(i)]
\item $q\geq 49$;
\item $p\geq \max\{24q, 441\}$; or
\item $q\geq 2$ and $p\leq - \max\{12+4q^2-2q,441 \}$.
\end{enumerate}
\end{thm}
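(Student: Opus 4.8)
The plan is to assume that $S^3_{p/q}(K')\cong S^3_{p/q}(K)$ as oriented manifolds, where $K=12n242$, and to deduce $K'=K$ in two stages: a Floer-theoretic reduction showing that $K'$ must share the knot Floer homology of $K$, followed by a geometric identification pinning down $K'$ itself. Throughout I would exploit that $K$ is a fibered genus-five $L$-space knot with
$$\Delta_K(t)=t^5-t^4+t^2-t+1-t^{-1}+t^{-2}-t^{-4}+t^{-5},$$
so that $2g(K)-1=9$ and the positive $L$-space surgery threshold is $p/q\geq 9$.

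First I would carry out the reduction to knot Floer homology. By definition $K'$ is obtained by surgery along the same slope $p/q$, so I may compare the two surgery formulae directly. For the positive slopes of condition (ii), where $p/q\geq 24>9$, the manifold $S^3_{p/q}(K)$ is an $L$-space; hence so is $S^3_{p/q}(K')$, which (as $q>0$) forces $K'$ to be an $L$-space knot. Comparing the correction terms of the two $L$-space surgeries via the Ozsv\'ath--Szab\'o mapping cone formula then forces the local invariants $V_i(K')=V_i(K)$ for all $i$, and for $L$-space knots this data determines the Alexander polynomial and hence all of $\widehat{HFK}$. Thus $K'$ is a fibered genus-five $L$-space knot with $\Delta_{K'}=\Delta_K$. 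For condition (i) ($q\geq 49$) and the negative slopes of condition (iii) the surgeries need not be $L$-spaces, so in place of the rank-$p$ characterisation I would compare the total rank of $\widehat{HF}$ together with the full set of correction terms read off from the mapping cone: the explicit thresholds $49$, $24q$, $441=(4g+1)^2$ and $12+4q^2-2q$ are exactly the bounds that make these comparisons force $V_i(K')=V_i(K)$, and hence the same conclusion on $\widehat{HFK}(K')$. The asymmetry between the linear bound $24q$ for positive slopes and the quadratic bound $4q^2-2q$ for negative slopes reflects that negative surgeries on an $L$-space knot (equivalently positive surgeries on its mirror) are far from being $L$-spaces, so more room is needed; care is correspondingly required with orientations and with passing to the mirror in the negative-slope case.

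With $K'$ now known to be a fibered genus-five $L$-space knot carrying $\Delta_K$, the second stage is to conclude $K'=K$. Here I would use that $K$ is hyperbolic and that its defining surgery inherits the rigid structure of $K$'s two lens space surgeries (Fintushel--Stern): for slopes in the stated ranges the surgered manifold is neither a small Seifert fibered space nor a reducible or toroidal manifold of the shape produced by the competing candidate knots, so the core of the surgery solid torus is geometrically distinguished and the complement $S^3\setminus K'$ is forced to be homeomorphic to $S^3\setminus K$. One must still eliminate the finitely many non-hyperbolic competitors — torus knots and cables sharing $\Delta_K$ and the genus — which I would rule out using the changemaker/lattice-embedding constraints coming from the $L$-space surgery data together with the quantitative statements in \cite{McCoy2018hyp_char}. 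Once the complements agree, an argument excluding exceptional Dehn fillings recovers the slope and yields $K'=K$.

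The main obstacle is precisely this second stage: knot Floer homology, even augmented by fiberedness and genus, does not determine a knot, so the crux is to show that no other genus-five $L$-space knot with the same Alexander polynomial can produce the identical oriented surgery for the slopes in question. Making this exclusion effective — treating the hyperbolic, toroidal and Seifert cases uniformly and verifying that the thresholds $24q$, $4q^2-2q+12$ and $441$ genuinely suffice — is where the real difficulty lies; by comparison the Floer reduction, while technical, is a fairly standard application of the mapping cone formula and the classification of $L$-space knots.
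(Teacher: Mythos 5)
Your proposal diverges fundamentally from the paper's argument, and the divergence exposes a genuine gap at exactly the point you flag as "the main obstacle." The paper's proof is not Floer-theoretic at its core: the essential input is that $12n242$ is one of only three hyperbolic knots (five up to mirroring) with volume at most $3.07$. Given $S^3_{p/q}(K)\cong S^3_{p/q}(K')$ with $K'$ hyperbolic of volume $>3.07$, the Futer--Kalfagianni--Purcell volume bound under Dehn filling forces $\ell_{K'}(p/q)<27.34$, and the Cao--Meyerhoff cusp area bound together with the 6-theorem converts this into $|q|<49$ and $|p|<49(2g(K')-1)$; the handful of low-volume candidates ($4_1$, $5_2$, $m5_2$, $m12n242$) are eliminated by the Casson--Walker surgery formula, the $\nu^+$ invariant, and the length hypothesis, while satellites are reduced to hyperbolic companions via Gabai's solid-torus surgery classification. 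Your proposal contains no substitute for this volume argument. The second stage as you describe it --- "the core of the surgery solid torus is geometrically distinguished and the complement $S^3\setminus K'$ is forced to be homeomorphic to $S^3\setminus K$" --- is an assertion, not an argument: nothing you have established about $K'$ (fibered, genus five, $L$-space knot, $\Delta_{K'}=\Delta_K$) comes close to forcing its complement to be that of $12n242$, and there is no known Floer-theoretic or lattice-theoretic mechanism that would rule out every other hyperbolic knot with this profile for the stated slopes. You correctly identify that knot Floer data does not determine the knot, but then the proof simply stops where the real work begins.

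Two further concrete problems. First, your reading of the thresholds is incorrect: $441$ is not $(4g+1)^2$ arising from a Floer comparison but $49\cdot(2g(K)-1)=49\cdot 9$, where $49$ comes from $1.79\times 27.34$ in the slope-length estimate; the bounds $24q$ and $12+4q^2-2q$ come from genus-detection theorems for surgeries bounding sharp 4-manifolds and their negative-surgery analogues, whose role is only to guarantee $g(K')=g(K)=5$ so that the volume-derived bound $|p|<49(2g(K')-1)$ yields a contradiction. Second, even your first stage is doubtful outside case (ii): for $q\geq 49$ with $|p|$ small, or for the negative slopes of case (iii), comparing ranks of $\widehat{HF}$ and correction terms does not force $V_i(K')=V_i(K)$ for all $i$, and the surgeries are not $L$-spaces, so the "classification of $L$-space knots" is unavailable. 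The Floer-type inputs the paper actually uses are far more modest (the Casson--Walker coefficient $\Delta''(1)$ and the single inequality $V_0(K)>0=V_0(mK)$), and they suffice only because the volume argument has already reduced the problem to five explicit knots.
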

The key input allowing us to prove Theorem~\ref{thm:12n242} is the fact that $12n242$ is one of the knots with smallest volume (up to reflection it one of only three hyperbolic knots with volume smaller than 3.07) \cite{Gabai_low_vol}. A result of Futer, Kalfagianni and Purcell on the change in volume of a hyperbolic manifold under Dehn filling \cite{Futer2008Dehn_filling} can then be used to restrict potential non-characterizing slopes coming from surgeries on hyperbolic knots with large volume (and satellites thereof). A miscellany of invariants can then be used to rule out non-characterizing slopes coming from hyperbolic knots with small volume.
\begin{figure}[ht]
  \centering
   \includegraphics[width=0.4\linewidth]{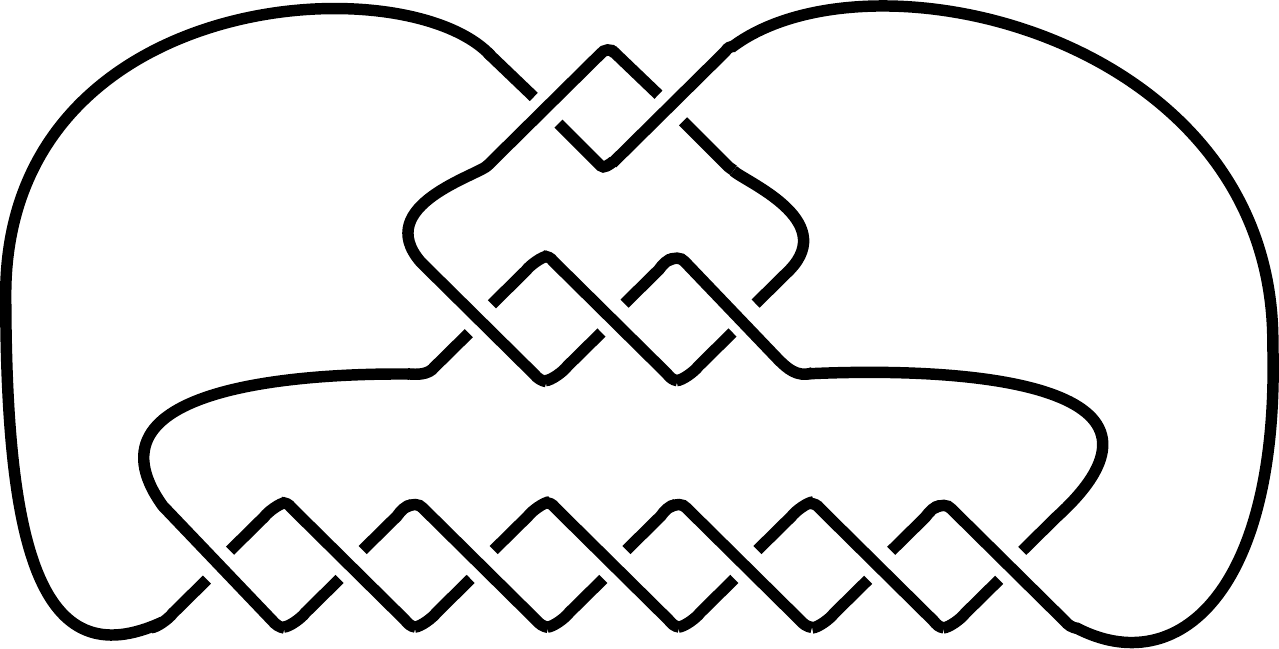}
 \caption{The main protagonist: $12n242$.}
 \label{fig:nugatory}
\end{figure}

In principle, one could use a similar approach to derive information about the characterizing slopes of the other small volume knots: $4_1$ and $5_2$. However, much better results have already been obtained by other means for both of these knots \cite{Ozsvath2006trefoilsurgery, Baldwin52}, so we restrict our analysis to $12n242$.

We note that Theorem~\ref{thm:12n242} says nothing about negative integer characterizing slope. Although there are knots which possess infinitely many integer non-characterizing slopes \cite{Baker2018Noncharacterizing}, all known examples admit infinitely many non-characterizing slopes of both sign. This suggests that $12n242$ (and $L$-space knots more generally) should admit only finitely many integer non-characterizing slopes. However establishing such a result remains an interesting and challenging problem.

\subsection*{Non-characterizing slopes}
Lackenby has shown for a hyperbolic knot $K$ any slope $p/q$ with $q$ sufficiently large is characterizing for $K$ \cite{Lackenby2019characterizing}. For example, Theorem~\ref{thm:12n242} shows that $q\geq 49$ is sufficiently large for $12n242$. However the ``sufficiently large'' here is inherently dependant on the specific knot in question. To illustrate this dependence, we exhibit a family of hyperbolic two-bridge knots $\{K_q\}_{q\geq 1}$ such that for each $q$, the slope $\frac{1}{q}$ is non-characterizing for $K_q$. This family is shown in Figure~\ref{fig:2bridge_exam} with the details of the construction discussed in Section~\ref{sec:construction}.

\subsection*{Conventions}
The following notational conventions will be in force throughout the paper:
\begin{itemize}
\item When considering a rational number $p/q\in \Q$, we will always assume this to be written with $p$ and $q$ coprime and $q\geq 1$.
\item Given two oriented 3-manifolds $Y$ and $Y'$, we will use $Y\cong Y'$ to denote the existence of an orientation-preserving homeomorphism between them.
\item For a knot $K$, we will denote its {\em Alexander polynomial} by $\Delta_{K}(t)$. We will always assume this is normalized so that $\Delta_{K}(1)=1$ and $\Delta_K(t)=\Delta_K(t^{-1})$.
\item Given a knot $K$ in $S^3$, we will use $mK$ to denote its mirror.
\item An $L$-space knot is one which admits positive $L$-space surgeries.
\end{itemize}
\subsection*{Acknowledgements} The author would like to thank Steve Boyer and Patricia Sorya for interesting conversations. He would also like to acknowledge the support of NSERC and FRQNT.
\section{Preliminaries}
In this section we gather together all the auxiliary results required for the proof of Theorem~\ref{thm:12n242}.

\subsection{Knots of small volume}
First we use the fact that Gabai, Haraway, Meyerhoff, Thurston and Yarmola have classified the hyperbolic 3-manifolds of small volume \cite{Gabai_low_vol}.
%\begin{thm}\label{thm:low_vol_classification}
%There are exactly 14 complete one-cusped hyperbolic manifolds with hyperbolic volume $\leq 3.07$. These are $\mathtt{m003}$, $\mathtt{m004}$, $\mathtt{m006}$, $\mathtt{m007}$, $\mathtt{m009}$, $\mathtt{m010}$, $\mathtt{m011}$, $\mathtt{m015}$, $\mathtt{m016}$, $\mathtt{m017}$, $\mathtt{m019}$, $\mathtt{m022}$, $\mathtt{m023}$ and $\mathtt{m026}$.
%\end{thm}
%Thus for knots we have the following.
\begin{thm}\label{thm:low_vol_knots}
If $K$ is a hyperbolic knot in $S^3$ with $\vol(K)\leq 3.07$, then 
\[K\in \{4_1, 5_2, 12n242, m5_2, m12n242\}.\]
\end{thm}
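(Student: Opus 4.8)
The plan is to identify $\vol(K)$ with the hyperbolic volume of the one-cusped manifold $S^3\setminus K$ and then appeal directly to the enumeration of low-volume cusped hyperbolic $3$-manifolds furnished by \cite{Gabai_low_vol}. Since $K$ is hyperbolic, its exterior $X_K = S^3\setminus \nu(K)$ is an orientable one-cusped hyperbolic $3$-manifold and by definition $\vol(K)=\vol(X_K)$. The hypothesis $\vol(K)\leq 3.07$ therefore places $X_K$ among the orientable one-cusped hyperbolic $3$-manifolds of volume at most $3.07$, a collection which \cite{Gabai_low_vol} identifies rigorously as an explicit finite list. The whole argument then reduces to sorting through this finite list.

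First I would recall the criterion for a one-cusped manifold $M$ to be a knot complement in $S^3$: this holds precisely when some Dehn filling of $M$ yields $S^3$, the filling slope being the meridian. A convenient necessary condition for pruning is $H_1(M)\cong \Z$, which discards most census manifolds immediately. Moreover, by Gordon--Luecke the resulting knot is determined up to mirror image by the unoriented homeomorphism type of $M$. So the task becomes: for each manifold on the census list with volume at most $3.07$, decide whether it admits a filling producing $S^3$, and if so name the knot.

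Carrying this out, one finds that the vast majority of the low-volume census manifolds either fail $H_1(M)\cong\Z$ or admit no $S^3$ filling. The survivors are the complement of the figure-eight knot $4_1$ (the unique smallest-volume hyperbolic knot, $\vol\approx 2.03$), the complement of $5_2$, and the complement of the $(-2,3,7)$-pretzel knot $12n242$, each of volume below $3.07$. Since $4_1$ is amphichiral it contributes a single knot, whereas $5_2$ and $12n242$ are chiral and so each contributes both itself and its mirror. This produces exactly the five knots $\{4_1, 5_2, 12n242, m5_2, m12n242\}$ in the statement.

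The main obstacle is the identification step: matching a census manifold, presented as an abstract triangulation, to a named knot exterior while correctly tracking chirality. In practice this is done with SnapPy by locating the $S^3$-filling slope and comparing the recovered knot against standard diagrams; the delicate point is that the census records manifolds only up to unoriented homeomorphism, so one must be careful to include the mirrors of the chiral knots $5_2$ and $12n242$, and to confirm that no further census manifold of volume below $3.07$ arises as an $S^3$ knot complement.
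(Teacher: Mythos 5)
Your argument is correct and follows essentially the same route as the paper: cite the Gabai--Haraway--Meyerhoff--Thurston--Yarmola census of the fourteen one-cusped orientable hyperbolic $3$-manifolds of volume at most $3.07$, identify $\mathtt{m004}$, $\mathtt{m015}$, $\mathtt{m016}$ as the only knot exteriors among them (namely those of $4_1$, $5_2$, $12n242$), and account for mirrors since the census is unoriented. The extra detail you give on how the identification is carried out (homology pruning, locating the $S^3$ filling, Gordon--Luecke) is a reasonable elaboration of what the paper leaves implicit.
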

\begin{proof}
Gabai, Haraway, Meyerhoff, Thurston and Yarmola have shown that there are exactly 14 one-cusped orientable hyperbolic 3-manifolds with hyperbolic volume less than or equal to 3.07 and that these are $\mathtt{m003}$, $\mathtt{m004}$, $\mathtt{m006}$, $\mathtt{m007}$, $\mathtt{m009}$, $\mathtt{m010}$, $\mathtt{m011}$, $\mathtt{m015}$, $\mathtt{m016}$, $\mathtt{m017}$, $\mathtt{m019}$, $\mathtt{m022}$, $\mathtt{m023}$ and $\mathtt{m026}$ \cite[Theorem~1.5]{Gabai_low_vol}.
Precisely three of these arise as the complements of knots in $S^3$: $\mathtt{m004}$, $\mathtt{m015}$ and $\mathtt{m016}$ are (ignoring orientations) the complements of $4_1$, $5_2$ and $12n242$, respectively.
\end{proof}
We will informally refer to the five knots in Theorem~\ref{thm:low_vol_knots} as the ``low volume knots'' and the remaining hyperbolic knots as the ``large volume knots''. For our purposes it will be useful to note that the volume of $4_1$ satisfies
\begin{equation}\label{eq:4_1vol}
\vol(4_1)\approx 2.0988\leq 2.1
\end{equation}
and the volume of $12n242$ satisfies
\begin{equation}\label{12n242_vol}
2.82\leq \vol(12n242)\approx 2.821 \leq 2.83.
\end{equation}

\subsection{Slope lengths}
Let $K$ be a hyperbolic knot in $S^3$, that is, $S^3\setminus K$ admits a complete finite-volume hyperbolic structure with one cusp. Given a slope $\sigma$ on $K$ and horoball neighbourhood $N$ of the cusp we can assign a length to $\sigma$ by considering the minimal length of a curve representing $\sigma$ on $\partial N$ (measured in the natural Euclidean metric on $\partial N$). Since $S^3\setminus K$ has a unique cusp, there is a unique maximal horoball neighbourhood of this cusp. We will use $\ell_K(\sigma)$ to denote the length of $\sigma$ with respect this maximal horoball neighbourhood.
%Given a slope $\sigma$ on the boundary of a compact hyperbolic 3-manifold one can assign a length to $\sigma$ by choosing a horoball neighbourhood $N$ of the cusps of $M$. There is a natural Euclidean metric on $\partial N$ and we say the length of $\sigma$ is the length of the shortest curve on $\partial N$ with the slope of $\sigma$. In general, the length of $\sigma$ depends on the choice of horoball neighbourhood. However, if $M$ has only a single cusp, then there is a unique choice of maximal horoball. Given a slope $p/q$ for the knot exterior $S^3\setminus \nu K$, we will use $\ell_K(p/q)$ to denote the length of $p/q$ with respect this maximal horoball neighbourhood.

\begin{lem}\label{lem:vol_length_bound}
Let $K$ and $K'$ be hyperbolic knots in $S^3$ with $\vol(K')<\vol(K)$. If $r$ and $r'$ are slopes such that $S_{r}^3(K)\cong S_{r'}^3(K')$, then
\[
\ell_{K}(r)<\frac{2\pi}{\sqrt{1-\left(\frac{\vol(K')}{\vol(K)}\right)^{\frac23}}}
\]
\end{lem}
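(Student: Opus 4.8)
The plan is to combine two facts about how hyperbolic volume behaves under Dehn filling. Write $Y=S^3_r(K)\cong S^3_{r'}(K')$ for the common filled manifold. The crucial input is the theorem of Futer, Kalfagianni and Purcell \cite{Futer2008Dehn_filling}: if the slope $r$ has length $\ell_K(r)>2\pi$ with respect to the maximal horoball neighbourhood of the cusp of $S^3\setminus K$, then the filling $Y=S^3_r(K)$ is hyperbolic and its volume satisfies
\[
\vol(Y)\geq \left(1-\left(\frac{2\pi}{\ell_K(r)}\right)^2\right)^{\frac32}\vol(K).
\]
The second ingredient is Thurston's observation that hyperbolic volume strictly decreases under nontrivial Dehn filling: since $Y$ is a hyperbolic filling of the one-cusped manifold $S^3\setminus K'$, we have $\vol(Y)<\vol(K')$.

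First I would dispose of the easy case. Since $\vol(K')<\vol(K)$, we have $0<1-\left(\frac{\vol(K')}{\vol(K)}\right)^{\frac23}<1$, so the denominator on the right-hand side of the desired inequality lies in $(0,1)$ and hence the right-hand side exceeds $2\pi$. Thus if $\ell_K(r)\leq 2\pi$ the bound holds automatically, and we may assume $\ell_K(r)>2\pi$; in particular the hypothesis of the Futer--Kalfagianni--Purcell estimate is satisfied and $Y$ is hyperbolic.

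In this main case, combining the two volume estimates gives
\[
\left(1-\left(\frac{2\pi}{\ell_K(r)}\right)^2\right)^{\frac32}\vol(K)\leq \vol(Y)<\vol(K'),
\]
so that
\[
\left(1-\left(\frac{2\pi}{\ell_K(r)}\right)^2\right)^{\frac32}<\frac{\vol(K')}{\vol(K)}.
\]
Raising both sides to the power $2/3$, isolating $\left(2\pi/\ell_K(r)\right)^2$, and taking square roots (every quantity involved being positive) then yields
\[
\frac{2\pi}{\ell_K(r)}>\sqrt{1-\left(\frac{\vol(K')}{\vol(K)}\right)^{\frac23}},
\]
which is equivalent to the claimed bound after inverting both sides.

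The argument is essentially a direct application of two known results, so I do not anticipate a serious obstacle. The only points demanding care are verifying that the filled manifold $Y$ is genuinely hyperbolic, so that Thurston's strict monotonicity of volume applies --- which is precisely what the length hypothesis $\ell_K(r)>2\pi$ guarantees through \cite{Futer2008Dehn_filling} --- and ensuring that $\ell_K(r)$ is the length measured in the \emph{maximal} cusp neighbourhood, as demanded by the normalized-length form of the Futer--Kalfagianni--Purcell inequality. Both are built into the definitions already fixed in this section.
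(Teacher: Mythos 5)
Your proof is correct and takes essentially the same route as the paper: combine the Futer--Kalfagianni--Purcell volume estimate for the filling along $r$ with Thurston's strict decrease of volume under the filling along $r'$, then rearrange. The only difference is that you explicitly dispose of the case $\ell_K(r)\leq 2\pi$ (where the bound is automatic since the right-hand side exceeds $2\pi$), a point the paper leaves implicit.
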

\begin{proof}
Futer, Kalfagianni and Purcell have shown that if $\ell=\ell_{K}(r)>2\pi$, then we have the following volume bound \cite[Theorem~1.1]{Futer2008Dehn_filling}:
\[
\vol(K)\left(1- \left(\frac{2\pi}{\ell}\right)^2\right)^{\frac32}< \vol(S_{r}^3(K)).
\]
Furthermore, since Thurston showed that volume strictly decreases under Dehn filling \cite{Thurston1982KleinianGroups}, we have that $\vol(S_{r}^3(K))=\vol(S_{r'}^3(K'))<\vol(K')$. Together these bounds give
\[
\vol(K)\left(1- \left(\frac{2\pi}{\ell}\right)^2\right)^{\frac32}<\vol(K'),
\]
which can be easily rearranged to give the desired bound on $\ell_K(p/q)$.
\end{proof}
%A similar argument yields restrictions on when a hyperbolic knot can share surgeries with $4_1$.
%\begin{lem}\label{lem:length_bound2}
%Let $K\neq 4_1$ be a hyperbolic knot in $S^3$. If there are $r,r'\in\Q$ such that $S_{K}^3(r)\cong S_{4_1}^3(r')$, then $\ell_{K}(r)<14.17$
%\end{lem}
%\begin{proof}
%In this case have $\vol(4_1)<2.03$ and $\vol(K)>2.82$. Thus by the same logic as in the proof of Lemma~\ref{lem:length_bound} we establish the bound
%\[
%\ell_{K}(p/q)< \frac{2\pi}{\sqrt{1-\left(\frac{2.03}{2.82}\right)^{\frac23}}}<14.17.
%\]
%\end{proof}
Next we need a mechanism for converting bound on $\ell_K(p/q)$ into bounds on $p$ and $q$.
\begin{lem}\label{lem:bounding_pq}
Let $K\subseteq S^3$ be a hyperbolic knot of genus $g(K)$. Then 
%\[\ell_K(p/q) \geq \frac{3.35\,|q|}{6}\quad\text{and}\quad \ell_K(p/q) \geq \frac{3.35\,|p|}{6(2g-1)}\]
\begin{enumerate}[(a)]
\item\label{it:q_bound} $|q|\leq 1.79 \ell_K(p/q)$ and
\item\label{it:p_bound} $|p|\leq 1.79\ell_K(p/q)(2g(K)-1)$
\end{enumerate}
\end{lem}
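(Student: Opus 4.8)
The plan is to convert everything into Euclidean geometry on the cusp cross-section and then feed in two metric inputs: an upper bound for the meridian length and a genus-dependent upper bound for the longitude length, each set against a lower bound for the cusp area. Concretely, I would fix the maximal horoball neighbourhood $N$ of the cusp and work on the flat torus $\partial N$. Passing to the universal cover $\mathbb{R}^2$, the meridian $\mu$ and longitude $\lambda$ of $K$ lift to translation vectors $w_\mu$ and $w_\lambda$ that generate a lattice whose covolume is the area $A$ of $\partial N$. With the convention that $p/q$-surgery fills the curve $p\mu+q\lambda$, the geodesic representative of this slope on $\partial N$ is a straight closed curve, so
\[
\ell_K(p/q)=\|\,p\,w_\mu+q\,w_\lambda\,\|,
\]
and I write $\ell_\mu=\|w_\mu\|$ and $\ell_\lambda=\|w_\lambda\|$ for the meridian and longitude lengths.

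The elementary heart of the argument is a perpendicular-component estimate. Resolving $w_\lambda$ into components parallel and orthogonal to $w_\mu$, the orthogonal component has length $A/\ell_\mu$; hence the component of $p\,w_\mu+q\,w_\lambda$ orthogonal to $w_\mu$ has length exactly $|q|\,A/\ell_\mu$, and therefore $\ell_K(p/q)\ge |q|\,A/\ell_\mu$. This rearranges to $|q|\le(\ell_\mu/A)\,\ell_K(p/q)$, and the same computation with the roles of $w_\mu$ and $w_\lambda$ exchanged gives $|p|\le(\ell_\lambda/A)\,\ell_K(p/q)$. Thus the bound on $|q|$ reduces to showing $\ell_\mu/A\le 1.79$, and the bound on $|p|$ reduces to showing $\ell_\lambda/A\le 1.79\,(2g(K)-1)$.

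It remains to supply these two ratio bounds from known geometry. For the meridian, since $1/0$-filling returns the non-hyperbolic manifold $S^3$, the $6$-theorem forces $\ell_\mu\le 6$; combining this with a lower bound for the maximal cusp area of a knot in $S^3$ yields $\ell_\mu/A\le 1.79$. For the longitude the situation is genuinely harder, since $0$-filling need not be exceptional and $\ell_\lambda$ may be large; what is needed is an upper bound for $\ell_\lambda$ growing linearly in the genus (morally $\ell_\lambda\le 6\,(2g(K)-1)$), obtained by controlling the longitude via a minimal-genus Seifert surface and its intersection with the maximal cusp. Dividing such a bound by the same cusp-area estimate gives $\ell_\lambda/A\le 1.79\,(2g(K)-1)$.

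I expect this genus-dependent longitude bound to be the main obstacle. The lattice geometry and the meridian estimate are routine, but bounding $\ell_\lambda$ requires honestly relating the Seifert genus to the intrinsic geometry of the cusp, and it is precisely this step that introduces the factor $2g(K)-1$ into the bound on $|p|$.
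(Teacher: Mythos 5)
Your approach is essentially the paper's: your perpendicular-component estimate is precisely the proof of the inequality $\ell_K(\alpha)\ell_K(\beta)\geq A\,\Delta(\alpha,\beta)$ that the paper quotes from Cooper--Lackenby, and the two metric inputs are the same, namely the $6$-theorem for the meridian and the Cao--Meyerhoff cusp-area bound $A\geq 3.35$. The genus-dependent longitude bound you single out as the main obstacle is not something you need to establish yourself: $\ell_K(0/1)\leq 6(2g(K)-1)$ is exactly Theorem~5.1 of Agol's paper on bounds for exceptional Dehn filling, which the paper simply cites at this step.
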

\begin{proof}
Let $N$ be a horocusp neighbourhood in the knot complement of $S^3_K$. Let $A$ be the area of $\partial N$ (equipped with its Euclidean metric)
A simple geometric argument (e.g. as used by Cooper and Lackenby \cite[Lemma~2.1]{Cooper1998Dehn}) shows that for any two slopes of $K$ we have 
\[\ell_K(\alpha)\ell_K(\beta)\geq A\Delta(\alpha,\beta),\]
where $\Delta(\alpha,\beta)$ denotes the distance between $\alpha$ and $\beta$. (cf. \cite[Lemma~8.1]{Agol2000BoundsI}). Since Cao and Meyerhoff have shown there always exists a horocusp neighbourhood $N$ with $\mathrm{Area}(\partial N)\geq 3.35$ \cite{Cao2001cusped}, this establishes the bound 
\[\ell_K(\alpha)\ell_K(\beta)\geq 3.35\Delta(\alpha,\beta),\]
for all slopes $\alpha$ and $\beta$.
 Since $\Delta(1/0,p/q)=|q|$ and $\ell_K(1/0)\leq 6$ by the 6-theorem of Agol and Lackenby \cite{Agol2000BoundsI, Lackenby2003Exceptional}, this gives the bound \eqref{it:q_bound}. Since $\Delta(0/1,p/q)=|p|$ and $\ell_K(0/1)\leq 6(2g-1)$ by \cite[Theorem~5.1]{Agol2000BoundsI}, this also gives the bound \eqref{it:p_bound}.
\end{proof}

\subsection{Hyperbolic surgeries on satellite knots}
We will use the following result to understand non-characterizing slopes coming from satellite knots.
\begin{lem}\label{lem:satellite}
Let $K'$ be a satellite knot such that $S_{p/q}^3(K')$ hyperbolic for some $p/q\in \Q$. Then there is a hyperbolic knot $J$ with $g(J)<g(K')$ and an integer $w>1$ such that $S_{p/q}^3(K)\cong S_{p/(qw^2)}^3(J)$. Moreover, if $q\geq 2$, then $K'$ is a cable of $J$ with winding number $w$.
\end{lem}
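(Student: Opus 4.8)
The plan is to analyze the structure of a satellite knot $K'$ whose $p/q$-surgery is hyperbolic, using the well-developed theory of surgeries on satellites. Recall that a satellite knot is built from a pattern $P$ sitting inside a solid torus $V$ with winding number $w$, and a companion knot; the essential torus $\partial V$ embedded in $S^3\setminus K'$ persists into the surgered manifold $S^3_{p/q}(K')$ unless the surgery is performed along a slope that meets the torus in a controlled way. Since we are assuming $S^3_{p/q}(K')$ is hyperbolic, it is atoroidal, so this companion torus must become compressible or boundary-parallel after filling. The first step is therefore to invoke the structural results (going back to Gordon and to the work on surgeries meeting satellite tori) showing that when a satellite surgery yields an atoroidal---in particular hyperbolic---manifold, the pattern must be sufficiently simple. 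Concretely, the surgery must be along a slope that the companion torus sees as a reducing or compressing operation, which forces the filling on the companion solid torus.

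Next I would identify the companion. Write $J$ for the companion knot, so that $K'$ lies in a tubular neighbourhood of $J$ with winding number $w>1$ (the case $w=0$ or the pattern being a core would contradict $K'$ being a genuine satellite, and $w\neq 0$ is needed for the surgery to interact numerically with the companion). The key homological bookkeeping is the standard fact that $p/q$-surgery on a satellite with winding number $w$ restricts on the companion to a surgery whose slope is $p/(qw^2)$: the longitude of the pattern wraps $w$ times, so the meridional framing multiplies by $w^2$ when transported to the companion torus. This gives the homeomorphism $S^3_{p/q}(K')\cong S^3_{p/(qw^2)}(J)$, identifying the hyperbolic filled manifold with a filling of the companion complement. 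Because this is hyperbolic, $J$ must itself be a hyperbolic knot (its complement embeds and the filling is atoroidal), establishing the hyperbolicity of $J$.

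For the genus inequality $g(J)<g(K')$, I would use the multiplicativity of genus (equivalently of the Alexander polynomial or of the Thurston norm) under the satellite operation: for a satellite with winding number $w$ and pattern of nonnegative winding, one has $g(K')\geq w\cdot g(J)$ plus the genus contribution of the pattern, and in all relevant cases with $w>1$ this forces $g(J)<g(K')$ strictly. I would cite the standard genus/Seifert genus additivity for satellites (e.g. via $\deg\Delta_{K'}(t)=w\cdot\deg\Delta_J(t^{w})+\deg\Delta_P$-type formulas, or directly via Schubert's results on companion genus). The final sentence, that when $q\geq 2$ the pattern $P$ must in fact be a cable (a torus knot in $V$ of winding number $w$), comes from the more refined classification of which patterns admit non-integral surgeries producing simple manifolds: non-integral surgery ($q\geq 2$) meeting a satellite torus so as to destroy it essentially forces the pattern to be a $(w,\cdot)$-cable pattern, by the Gordon--Litherland / Gordon theory of exceptional surgeries on satellites.

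The main obstacle I expect is the cabling conclusion under $q\geq 2$ rather than the genus inequality or the homological slope computation, both of which are routine. Ruling out more complicated patterns and pinning down that the surviving pattern is exactly a cable requires invoking the precise classification of satellite knots with non-hyperbolic or exceptional fillings---this is where one must be careful to quote the correct theorem (the analysis of when a satellite admits a non-integer surgery yielding a hyperbolic, hence atoroidal and irreducible, manifold) rather than reprove it. I would aim to reduce the entire lemma to a citation of this structural result combined with the elementary winding-number slope arithmetic.
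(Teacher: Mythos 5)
Your overall skeleton matches the paper's: atoroidality of $S^3_{p/q}(K')$ forces the companion solid torus $V$ to become a solid torus after surgery, the slope transported to the companion is $p/(qw^2)$ by the standard homological argument (\cite[Lemma~3.3]{Gordon1983Satellite} in the paper), the genus inequality comes from Schubert's formula $g(K')=g(P)+wg(J)$, and the cabling conclusion for $q\geq 2$ comes from a structural classification of patterns admitting solid-torus fillings. However, there are two concrete gaps. First, your justification that $w>1$ --- ``the case $w=0$ \dots would contradict $K'$ being a genuine satellite'' --- is false: Whitehead doubles are genuine satellites with winding number $0$. The inequality $w\geq 2$ is not automatic for satellites; it comes out of the same classification you defer to the end, namely Gabai's theorem that a knot in a solid torus with a nontrivial solid-torus surgery is a $0$-bridge or $1$-bridge braid \cite{Gabai1989solidtori}, and braids (other than the core) have winding number at least $2$. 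Since $w>1$ is what drives both the slope formula and the genus inequality, you cannot treat it as a triviality about satellites; the logical order matters.

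Second, your argument that $J$ is hyperbolic (``its complement embeds and the filling is atoroidal'') does not establish hyperbolicity. A priori $J$ could be a torus knot or itself a satellite. The paper handles this by choosing the companion torus to be innermost, so that $J$ is not a satellite and hence, by Thurston's trichotomy, is either a torus knot or hyperbolic; the torus-knot case is then excluded because surgeries on torus knots are never hyperbolic (they are Seifert fibered or reducible), not because of anything about embedded complements. You should also be aware that the citation you reach for on the cabling step (Gordon--Litherland) is not the relevant one; the statement that non-cable patterns with solid-torus fillings are $1$-bridge braids, and that these only admit \emph{integral} solid-torus fillings (whence $q\geq 2$ forces a cable), is due to Gabai. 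With these two repairs your argument coincides with the paper's.
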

\begin{proof}
Let $T$ be an incompressible torus in $S^3\setminus K'$. We can consider $K'$ as a knot in the solid torus $V$ bounded by $T$. Thus we can consider $K'$ as a satellite with companion given by the core $J$ of $V$. By choosing $T$ to be ``innermost'', we can ensure that $S^3\setminus J$ contains no further incompressible tori. That is, we can assume that $J$ is not a satellite knot. By Thurston's trichotomy for knots in $S^3$, this implies that $J$ is a torus knot or a hyperbolic knot \cite{Thurston1982KleinianGroups}. Since $S_{p/q}^3(K')$ is hyperbolic, it is atoroidal and irreducible. Consequently, after surgery the solid torus $V$ must become another solid torus. However, Gabai has classified knots in a solid torus with non-trivial solid torus surgeries, showing that $K'$ is either a torus knot or a 1-bridge braid in $V$ \cite{Gabai1989solidtori}. Moreover since solid torus fillings on 1-bridge braids only occur for integer surgery slopes, $K'$ is a cable of $J$ unless $q=1$. In either event, we have that
\[ S_{p/q}^3(K') \cong S_{p/q'}^3(J),\]
where the slope $p/q'$ is determined by the curve bounding a disk after surgering $V$. Using a homological argument one can show that $q'=qw^2$, where $w>1$ is the winding number of $K'$ in $V$ \cite[Lemma~3.3]{Gordon1983Satellite}.
Since $S_{p/q}^3(K)$ is a hyperbolic manifold, $J$ cannot be a torus knot. It follows that $J$ must be a hyperbolic knot. The only remaining statement is the inequality $g(J)<g(K')$. This follows from Schubert's formula for the genus of a satellite knot \cite{Schubert}, which
asserts that for a knot $K'=P(J)$ with pattern $P$ of winding number $w\geq 0$, there is a constant $g(P)\geq 0$ such that
\[
g(K')=g(P)+wg(J).
\]
We obtain the necessary inequality since $w\geq 2$.
\end{proof}

\subsection{The Casson-Walker invariant.}
It will also be convenient to use surgery obstructions derived from the Casson-Walker invariant \cite{Walker1992extension}. For any rational homology sphere $Y$, this is a rational-valued invariant $\lambda(Y)\in\Q$. Boyer and Lines showed that this satisfies the following surgery formula \cite{Boyer1990Surgery}:
\begin{equation*}\label{eq:BoyerLines}
\lambda(S_{p/q}^3(K'))=\lambda(S^3_{p/q}(U)) + \frac{q}{2p} \Delta_{K'}''(1),
\end{equation*}
where $\Delta_K''(1)$ denotes the second derivative of the Alexander polynomial $\Delta_K(t)$ evaluated at $t=1$. This formula immediately yields the following observation.
\begin{lem}\label{lem:CW_obstruction}
Let $K$ and $K'$ be knots. If there is a non-zero $p/q\in\Q$ such that $S_{p/q}^3(K)\cong S_{p/q}^3(K')$, then $\Delta''_{K}(1)=\Delta''_{K'}(1)$.
\end{lem}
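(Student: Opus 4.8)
The plan is to apply the Boyer--Lines surgery formula displayed immediately above to both $K$ and $K'$ and to exploit the fact that the Casson--Walker invariant $\lambda$ depends only on the oriented homeomorphism type of a rational homology sphere. First I would observe that since $p/q$ is non-zero and (by our conventions) $q\geq 1$, we have $p\neq 0$, so both $S_{p/q}^3(K)$ and $S_{p/q}^3(K')$ are rational homology spheres and $\lambda$ is defined on each. Writing out the surgery formula for each knot gives
\[
\lambda(S_{p/q}^3(K))=\lambda(S^3_{p/q}(U)) + \frac{q}{2p} \Delta_{K}''(1)
\qquad\text{and}\qquad
\lambda(S_{p/q}^3(K'))=\lambda(S^3_{p/q}(U)) + \frac{q}{2p} \Delta_{K'}''(1).
\]
The key point is that the ``correction term'' $\lambda(S^3_{p/q}(U))$ depends only on the slope $p/q$ and not on the knot, so it is identical in the two expressions.

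Next I would invoke the hypothesis $S_{p/q}^3(K)\cong S_{p/q}^3(K')$. Because $\lambda$ is an invariant of the oriented homeomorphism type, this forces $\lambda(S_{p/q}^3(K))=\lambda(S_{p/q}^3(K'))$. Subtracting the two formulas and cancelling the common term $\lambda(S^3_{p/q}(U))$ then yields
\[
0=\frac{q}{2p}\bigl(\Delta_{K}''(1)-\Delta_{K'}''(1)\bigr).
\]
Since $p/q\neq 0$ we have $p\neq 0$ and $q\neq 0$, so the coefficient $\frac{q}{2p}$ is non-zero and can be divided out, giving $\Delta_{K}''(1)=\Delta_{K'}''(1)$, as required.

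There is essentially no serious obstacle here: the entire content is packaged in the surgery formula of Boyer and Lines, and the argument is a one-line cancellation. The only points requiring a moment's care are confirming that the surgered manifolds are genuinely rational homology spheres (which is exactly where the hypothesis $p/q\neq 0$ enters, so that $\lambda$ is defined and the coefficient $\frac{q}{2p}$ is invertible) and noting that the knot-independent term $\lambda(S^3_{p/q}(U))$ is the same on both sides so that it cancels cleanly.
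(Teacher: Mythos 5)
Your proposal is correct and follows exactly the argument the paper intends: the paper states that the lemma follows immediately from the Boyer--Lines surgery formula, and your write-up simply makes explicit the cancellation of $\lambda(S^3_{p/q}(U))$ and the division by the non-zero coefficient $\frac{q}{2p}$. Nothing is missing.
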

Lemma~\ref{lem:CW_obstruction} can be used to obstruct non-characterizing slopes coming from cables.
\begin{lem}\label{lem:CW_satellite_obstruction}
Let $K$ and $K'$ be knots. If there is $K''$ a non trivial cable of $K'$ and a non-zero slope $p/q\in\Q$ such that $S_{p/q}^3(K)\cong S_{p/q}^3(K'')$, then there are coprime integers $r,s$, with $s\geq 2$ such that
\[
\Delta_K''(1)= \frac{(r^2-1)(s^2-1)}{12} + s^2\Delta''_{K'}(1)
\]
\end{lem}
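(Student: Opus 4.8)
The plan is to reduce the statement to a computation of $\Delta_{K''}''(1)$ for the cable $K''$ and then expand this in terms of the data of the companion $K'$ and of the cabling torus knot. Since $K''$ is a nontrivial cable of $K'$, we may write it as the $(r,s)$-cable with $\gcd(r,s)=1$ and winding number $s\geq 2$; this is precisely where the coprime integers $r,s$ with $s\geq 2$ of the statement come from. Applying Lemma~\ref{lem:CW_obstruction} to the homeomorphism $S_{p/q}^3(K)\cong S_{p/q}^3(K'')$ immediately gives $\Delta_K''(1)=\Delta_{K''}''(1)$, so it suffices to establish the identity
\[
\Delta_{K''}''(1)=\frac{(r^2-1)(s^2-1)}{12}+s^2\Delta_{K'}''(1).
\]

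First I would invoke the classical cabling formula for the Alexander polynomial, namely $\Delta_{K''}(t)=\Delta_{K'}(t^s)\,\Delta_{T_{r,s}}(t)$, where $T_{r,s}$ denotes the $(r,s)$-torus knot. Writing this as a product $A(t)B(t)$ with $A(t)=\Delta_{K'}(t^s)$ and $B(t)=\Delta_{T_{r,s}}(t)$, I would differentiate twice and evaluate at $t=1$. Here the normalization $\Delta_J(1)=1$ and the symmetry $\Delta_J(t)=\Delta_J(t^{-1})$, which forces $\Delta_J'(1)=0$, do all the work: the cross term $2A'(1)B'(1)$ vanishes, $A(1)=B(1)=1$, and the chain rule gives $A''(1)=s^2\Delta_{K'}''(1)$. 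This collapses the Leibniz expansion to
\[
\Delta_{K''}''(1)=s^2\Delta_{K'}''(1)+\Delta_{T_{r,s}}''(1),
\]
so everything reduces to the single torus-knot quantity $\Delta_{T_{r,s}}''(1)$.

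The main computational step, and the only genuinely delicate one, is to verify that $\Delta_{T_{r,s}}''(1)=\frac{(r^2-1)(s^2-1)}{12}$. The subtlety is that the usual closed form $\Delta_{T_{r,s}}(t)=\frac{(t^{rs}-1)(t-1)}{(t^r-1)(t^s-1)}$ is a $0/0$ expression at $t=1$. I would clear this by rewriting it as $\Delta_{T_{r,s}}(t)=\phi_{rs}(t)/(\phi_r(t)\phi_s(t))$, where $\phi_n(t)=1+t+\cdots+t^{n-1}$, which is regular at $t=1$ with $\phi_n(1)=n$. Passing to the logarithmic derivative $L=\log\Delta_{T_{r,s}}$ turns the product into the sum $L=\log\phi_{rs}-\log\phi_r-\log\phi_s$, and the elementary evaluations $\phi_n'(1)=\binom{n}{2}$ and $\phi_n''(1)=\frac{n(n-1)(n-2)}{3}$ give $(\log\phi_n)'(1)$ and $(\log\phi_n)''(1)$ in closed form. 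Since $\Delta_{T_{r,s}}''(1)=L''(1)+(L'(1))^2$ (using $\Delta_{T_{r,s}}(1)=1$), assembling these pieces and simplifying yields the claimed value; as a sanity check, the trefoil $T_{2,3}$ gives $\frac{3\cdot 8}{12}=2$, matching $\Delta_{T_{2,3}}''(1)$ computed directly from $t-1+t^{-1}$. Combining this with the reduction above and the equality $\Delta_K''(1)=\Delta_{K''}''(1)$ completes the proof. I expect the bookkeeping in this final simplification to be the only place where care is required; alternatively, the formula $\Delta_{T_{r,s}}''(1)=\frac{(r^2-1)(s^2-1)}{12}$ could simply be quoted as a standard fact about torus knots.
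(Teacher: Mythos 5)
Your overall strategy coincides with the paper's: apply Lemma~\ref{lem:CW_obstruction} to the homeomorphism to get $\Delta_K''(1)=\Delta_{K''}''(1)$, feed in the cabling formula $\Delta_{K''}(t)=\Delta_{K'}(t^s)\Delta_{T_{r,s}}(t)$, use the symmetry-forced vanishing of first derivatives at $t=1$ to collapse the Leibniz expansion to $\Delta_{K''}''(1)=\Delta_{T_{r,s}}''(1)+s^2\Delta_{K'}''(1)$, and then evaluate $\Delta_{T_{r,s}}''(1)$. All of that is correct and is exactly the paper's argument.

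The one genuine problem is in your computation of $\Delta_{T_{r,s}}''(1)$. Writing $\Delta_{T_{r,s}}(t)=\phi_{rs}(t)/(\phi_r(t)\phi_s(t))$ with $\phi_n(t)=1+t+\cdots+t^{n-1}$ gives the \emph{unsymmetrized} Alexander polynomial; the symmetrized one (the normalization in force throughout the paper, and the one for which $\frac{(r^2-1)(s^2-1)}{12}$ is the correct value, and which you need for consistency with the earlier step where you used $\Delta'(1)=0$) carries the extra monomial $t^{-(r-1)(s-1)/2}$. Your $L=\log\phi_{rs}-\log\phi_r-\log\phi_s$ omits the corresponding term $-g\log t$ with $g=\frac{(r-1)(s-1)}{2}$, so in fact $L'(1)=g\neq 0$, and $L''(1)+(L'(1))^2$ computes the second derivative of the unsymmetrized polynomial. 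This differs from the symmetrized value by $g^2-g$, which vanishes exactly when $g\leq 1$; your trefoil sanity check ($g=1$) is therefore a false positive. For $T_{2,5}$, for instance, your recipe gives $8$ rather than the correct $6$. The fix is immediate: include $-g\log t$ in $L$, after which $L'(1)=0$ and $\Delta_{T_{r,s}}''(1)=L''(1)+g$, which does simplify to $\frac{(r^2-1)(s^2-1)}{12}$. The paper sidesteps this pitfall by working from the outset with the symmetrized factors $Q_k(t)=t^{(1-k)/2}\phi_k(t)$, which satisfy $Q_k'(1)=0$, and implicitly differentiating the identity $Q_r(t)Q_s(t)\Delta_{T_{r,s}}(t)=Q_{rs}(t)$; falling back on quoting the torus-knot formula as a known fact, as you suggest, would also be acceptable.
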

\begin{proof}
Suppose that $K''$ is the $(r,s)$-cable of $K'$, where $s\geq 2$ is the winding number.
By the usual formula for the Alexander polynomial of a satellite knot, we have that
\begin{equation*}\label{eq:satellitealexpoly}
\Delta_{K''}(t)=\Delta_{K'}(t^s)\Delta_{T_{r,s}}(t).
\end{equation*}
where $T_{r,s}$ denotes the $(r,s)$-torus knot. Taking second derivatives we obtain\footnote{The reader should note that since $\Delta_{K}(t)=\Delta_{K}(t^{-1})$  we have that $\Delta'_{K}(1)=0$}
\begin{equation}\label{eq:satellitederiv}
\Delta_{K''}''(1)= \Delta_{T_{r,s}}''(1)+ s^2\Delta_{K'}''(1).
\end{equation}
Since the torus knot $T_{r,s}$ has symmetrized Alexander polynomial 
\[
\Delta_{T_{r,s}}(t)=t^{-\frac{(r-1)(s-1)}{2}}\frac{(t^{rs}-1)(t-1)}{(t^r-1)(t^s-1)},
\]
one can calculate that\footnote{Since the direct calculation is somewhat involved, we include a derivation  for completeness, but relegate it to the appendix.}
\begin{equation}\label{eq:torus_deriv}
\Delta_{T_{r,s}}''(1)=\frac{(r^2-1)(s^2-1)}{12}.
\end{equation}
Combining \eqref{eq:satellitederiv} and \eqref{eq:torus_deriv} with Lemma~\ref{lem:CW_obstruction} gives the desired statement.
\end{proof}
We will be applying these obstructions to the knots $5_2$ and $12n242$.
These have symmetrized Alexander polynomials:
\begin{align*}
\Delta_{5_2}(t)&=2t^{-1}-3+2t\\
\Delta_{12n242}(t)&=t^{-5}-t^{-4}+t^{-2}-t^{-1}+1-t +t^2-t^4+t^5.
\end{align*}
Hence one finds that 
\begin{equation}\label{eq:delta_examples}
\Delta_{5_2}''(1)=4 \quad\text{and}\quad \Delta_{12n242}''(1)=24.
\end{equation}

\subsection{An obstruction from $\nu^+$}
Here we take some input from knot Floer homology. Recall that for a knot $K$ in $S^3$, Ni and Wu derived a non-increasing sequence of non-negative integers $V_0(K),V_1(K), \dots$ from the knot Floer chain complex which can be used to calculate the $d$-invariants of surgeries on $K$. For $p/q>0$ and an appropriate identification of $\spinc(S_{p/q}^3(K))$ and $\spinc(S_{p/q}^3(U))$ with $\{0,1, \dots, p-1\}$, we have \cite[Proposition~1.6]{ni2010cosmetic}
\begin{equation}\label{eq:NiWuformula}
d(S_{p/q}^3(K),i)=d(S_{p/q}^3(U),i)-2\max\left\{V_{\lfloor\frac{i}{q}\rfloor}(K),V_{\lceil\frac{p-i}{q}\rceil}(K)\right\}.
\end{equation}
Hom and Wu defined the invariant $\nu^+(K)$ to be the smallest index $i$ for which $V_i=0$ \cite{Hom2016refinement}. In particular we have $\nu^+(K)=0$ if and only if $V_0=0$.

\begin{lem}\label{lem:nu+application}
Let $K$ be a knot such that $\nu^+(K)>0$ and $\nu^+(mK)=0$. Then there is no non-zero slope $p/q\in \Q$ such that $S_{p/q}^3(K)\cong S_{p/q}^3(mK)$.
\end{lem}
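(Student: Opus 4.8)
The plan is to distinguish $S^3_{p/q}(K)$ from $S^3_{p/q}(mK)$ by comparing the $d$-invariants computed via the Ni--Wu formula \eqref{eq:NiWuformula}, which applies directly only to positive surgeries. So the first step is to reduce to the case $p>0$. Recall the standard mirroring identity $-S^3_{p/q}(L)\cong S^3_{-p/q}(mL)$, which holds because reflecting $S^3$ reverses orientation, sends $L$ to $mL$, and negates the surgery slope. Assume for contradiction that $S^3_{p/q}(K)\cong S^3_{p/q}(mK)$ for some non-zero $p/q$. If $p<0$, then applying orientation reversal to both sides and using this identity (together with $m(mK)=K$) yields an orientation-preserving homeomorphism $S^3_{-p/q}(K)\cong S^3_{-p/q}(mK)$ with $-p/q>0$. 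Hence it suffices to rule out the case $p>0$.

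Now suppose $p>0$. The hypothesis $\nu^+(mK)=0$ means $V_0(mK)=0$, and since the sequence $V_i$ is non-negative and non-increasing this forces $V_i(mK)=0$ for all $i$. Thus \eqref{eq:NiWuformula} gives $d(S^3_{p/q}(mK),i)=d(S^3_{p/q}(U),i)$ for every $i$. On the other hand, $\nu^+(K)>0$ means $V_0(K)\geq 1$, so \eqref{eq:NiWuformula} shows that each $d$-invariant of $S^3_{p/q}(K)$ satisfies $d(S^3_{p/q}(K),i)=d(S^3_{p/q}(U),i)-2\max\{V_{\lfloor i/q\rfloor}(K),V_{\lceil (p-i)/q\rceil}(K)\}\leq d(S^3_{p/q}(U),i)$, with the correction term strictly positive for at least one index; indeed, for any $i$ with $0\leq i<q$ the maximum is at least $V_0(K)\geq 1$.

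Finally I would extract a numerical contradiction by summing over all spin$^c$ structures. The quantity $\sum_i d(Y,i)$ is an invariant of the oriented homeomorphism type of a rational homology sphere $Y$, since an orientation-preserving homeomorphism induces a $d$-invariant-preserving bijection of spin$^c$ structures. Summing the two computations above over $i\in\{0,\dots,p-1\}$ gives $\sum_i d(S^3_{p/q}(mK),i)=\sum_i d(S^3_{p/q}(U),i)$, whereas $\sum_i d(S^3_{p/q}(K),i)=\sum_i d(S^3_{p/q}(U),i)-2\sum_i\max\{V_{\lfloor i/q\rfloor}(K),V_{\lceil (p-i)/q\rceil}(K)\}<\sum_i d(S^3_{p/q}(U),i)$, where the strict inequality comes from the positive term identified above. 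These two totals cannot be equal, contradicting $S^3_{p/q}(K)\cong S^3_{p/q}(mK)$. The only genuinely delicate point is the sign reduction in the first step: one must apply the mirroring identity with the correct orientation conventions so that the positive-surgery hypothesis of \eqref{eq:NiWuformula} is met. Everything afterwards is a direct comparison of $d$-invariants.
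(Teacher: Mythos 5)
Your proof is correct and takes essentially the same route as the paper's: reduce to $p>0$ using $-S^3_{p/q}(L)\cong S^3_{-p/q}(mL)$, then sum the Ni--Wu formula \eqref{eq:NiWuformula} over all $\spinc$ structures and observe that the totals for $S^3_{p/q}(K)$ and $S^3_{p/q}(mK)$ differ by at least $2V_0(K)>0$. Your version just spells out a couple of steps the paper leaves implicit (that $V_i(mK)=0$ for all $i$, and that the total $d$-invariant sum is an invariant of the oriented homeomorphism type).
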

\begin{proof}
Since $-S_{p/q}^3(K)\cong S_{-p/q}^3(mK)$, we can assume that $p/q>0$. Summing the formula \eqref{eq:NiWuformula} over all \spinc-structures on $S_{p/q}^3(mK)$ and $S_{p/q}^3(K)$ we see that
%\begin{align*}
%\sum_{\spincs \in \spinc\left(S_{p/q}^3(mK)\right)} d(S_{p/q}^3(mK),\spincs) - \sum_{\spincs \in \spinc\left(S_{p/q}^3(K)\right)} d(S_{p/q}^3(K),\spincs)&=2 \sum_{i=0}^{p-1} \max\left\{V_{\lfloor\frac{i}{q}\rfloor}(K),V_{\lceil\frac{p-i}{q}\rceil}(K)\right\}\\
%&\geq 2V_0>0.
%\end{align*}
\begin{align*}
\sum_{i=0}^{p-1} d(S_{p/q}^3(mK),i) - \sum_{i=0}^{p-1} d(S_{p/q}^3(K),i)&=2 \sum_{i=0}^{p-1} \max\left\{V_{\lfloor\frac{i}{q}\rfloor}(K),V_{\lceil\frac{p-i}{q}\rceil}(K)\right\}\\
&\geq 2V_0>0.
\end{align*}
Which implies that $S_{p/q}^3(mK)$ and $S_{p/q}^3(mK)$ cannot be homeomorphic.
\end{proof}
\begin{rem}\label{rem:nu+ofLspace}
We note that Lemma~\ref{lem:nu+application} applies to any non-trivial $L$-space knot (and in particular $12n242$). For a non-trivial $L$-space knot one has $\nu^+(K)=g(K)>0$ \cite{Hom2016refinement} and $\nu^+(mK)=0$ \cite[Lemma~16]{Gainullin2017mapping}.
\end{rem}

%\subsection{A medley of knot invariants}
%Finally we collect to together a an assortment of values that will be used in the proof.
%The symmetrized Alexander polynomials of $5_2$ and $12n242$ are
%\begin{align*}
%\Delta_{5_2}(t)&=2t^{-1}-3+2t\\
%\Delta_{12n242}(t)&=t^{-5}-t^{-4}+t^{-2}-t^{-1}+1-t +t^2-t^4+t^5.
%\end{align*}
%From which one can calculate that 
%\begin{equation}\label{eq:delta_examples}
%\Delta_{5_2}''(1)=4 \quad\text{and}\quad \Delta_{12n242}''(1)=24.
%\end{equation}
%We remind also the reader of their genera:
%\begin{equation}\label{eq:genera_examples}
%g(5_2)=1 \quad\text{and}\quad g(12n242)=5.
%\end{equation}
%Since $12n242$ is an $L$-space knot, we have that \cite[Lemma~18]{Gainullin2017mapping}
%\begin{equation}\label{eq:nu12n242}
%\nu^+(12n242)=5 \quad\text{and}\quad \nu^+(m12n242)=0.
%\end{equation}

\section{Proof of Theorem~\ref{thm:12n242}}
Throughout this section we take $K=12n242$. Suppose that $p/q\neq 0$ is a non-characterizing slope for $K$ satisfying
 \begin{equation}\label{eq:length_hypothesis}
\ell_{K}(p/q)\geq 14.17> \frac{2\pi}{\sqrt{1-\left(\frac{\vol(4_1)}{\vol(12n242)}\right)^{\frac23}}}.
\end{equation} 
Let $K'\neq K$ be a knot in $S^3$ such that $S_{p/q}^3(K)\cong S_{p/q}^3(K')$. 

The length bound \eqref{eq:length_hypothesis} implies that the manifold $S_{p/q}^3(K)$ is hyperbolic and, using Lemma~\ref{lem:vol_length_bound}, that $S_{p/q}^3(K)$ cannot be obtained by any surgery on the figure-eight knot $4_1$.
By Thurston's trichotomy for knots in $S^3$, the knot $K'$ is either a torus knot, a hyperbolic knot or a satellite knot. Since torus knots never yield a hyperbolic manifold by surgery, we may ignore the first possibility and restrict our attention to the latter two options.

\begin{claim}\label{cl:hyp_bound}
If $K'$ is a hyperbolic knot, then 
\begin{equation*}\label{eq:hyperbolic_bound}
q<49 \quad  \text{and} \quad |p|< 49(2g(K')-1).
\end{equation*}
\end{claim}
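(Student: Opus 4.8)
The plan is to leverage the length hypothesis \eqref{eq:length_hypothesis} together with the volume-based machinery already assembled. Under the standing assumption that $p/q$ is a non-characterizing slope with $\ell_K(p/q)\geq 14.17$, and that $K'\neq K$ is a hyperbolic knot realizing $S^3_{p/q}(K')\cong S^3_{p/q}(K)$, I want to convert the hyperbolicity of $K'$ and the equality of surgered manifolds into an explicit bound on $\ell_K(p/q)$ itself, and then feed that into Lemma~\ref{lem:bounding_pq} to extract the stated bounds on $p$ and $q$.

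First I would compare volumes. Since $S^3_{p/q}(K)\cong S^3_{p/q}(K')$ and both $K,K'$ are hyperbolic, Thurston's theorem gives $\vol(K')>\vol(S^3_{p/q}(K'))=\vol(S^3_{p/q}(K))$, so $\vol(K')>0$ automatically, but the useful direction requires knowing that $\vol(K')$ is not too small relative to $\vol(K)=\vol(12n242)$. Here the low-volume classification (Theorem~\ref{thm:low_vol_knots}) is the crucial lever: the only hyperbolic knots with volume at most $3.07$ are $4_1, 5_2, 12n242, m5_2, m12n242$. The length hypothesis \eqref{eq:length_hypothesis}, via Lemma~\ref{lem:vol_length_bound}, already rules out $K'=4_1$ (whose volume is smallest). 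I would then argue that $K'$ cannot be any of the other low-volume knots either --- $12n242=K$ is excluded since $K'\neq K$, and $5_2$, $m5_2$, $m12n242$ must be eliminated using the auxiliary obstructions (the Casson--Walker invariant of Lemma~\ref{lem:CW_obstruction} via the values in \eqref{eq:delta_examples}, and the $\nu^+$ obstruction of Lemma~\ref{lem:nu+application} together with Remark~\ref{rem:nu+ofLspace} for the mirror $m12n242$). Consequently $\vol(K')>3.07>\vol(12n242)$, so in particular $\vol(K)<\vol(K')$.

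With $\vol(K)<\vol(K')$ established, I would apply Lemma~\ref{lem:vol_length_bound} in the opposite role: taking the knot of smaller volume to be $K=12n242$ and the larger to be $K'$, the lemma yields
\[
\ell_K(p/q)<\frac{2\pi}{\sqrt{1-\left(\frac{\vol(12n242)}{\vol(K')}\right)^{\frac23}}}.
\]
Since $\vol(K')>3.07$ and $\vol(12n242)\leq 2.83$ by \eqref{12n242_vol}, I would bound the right-hand side by the monotone worst case $\vol(K')=3.07$, giving a numerical upper bound on $\ell_K(p/q)$ strictly below $27.4$ (the precise constant being a routine arithmetic check that I expect to come out comfortably below that threshold). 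Feeding this length bound into Lemma~\ref{lem:bounding_pq}\eqref{it:q_bound} gives $|q|\leq 1.79\,\ell_K(p/q)<49$, and into part~\eqref{it:p_bound} gives $|p|\leq 1.79\,\ell_K(p/q)(2g(K')-1)<49(2g(K')-1)$, which are exactly the two inequalities claimed.

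The main obstacle I anticipate is the clean elimination of the remaining small-volume candidates $5_2, m5_2, m12n242$, since the volume inequality alone does not separate them from $12n242$; this is where the non-geometric invariants must do the work, and I would need to confirm that $\Delta''_{5_2}(1)=4\neq 24=\Delta''_{12n242}(1)$ disposes of $5_2$ and $m5_2$ (noting $\Delta$ is mirror-invariant, so this single computation covers both via Lemma~\ref{lem:CW_obstruction}), while Lemma~\ref{lem:nu+application} handles $m12n242$. A secondary subtlety is ensuring the numerical bound on $\ell_K(p/q)$ is genuinely below the value $2\pi/\sqrt{1-(\vol(12n242)/3.07)^{2/3}}$, so that the final constants $49$ emerge rather than something larger; I expect the margin to be adequate but would verify the arithmetic explicitly.
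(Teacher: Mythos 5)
Your overall strategy is exactly the paper's: eliminate the low-volume candidates ($4_1$ by the length hypothesis, $5_2$ and $m5_2$ by the Casson--Walker obstruction since $\Delta_{5_2}''(1)=4\neq 24$, $m12n242$ by $\nu^+$), conclude $\vol(K')>3.07$, and then combine Lemma~\ref{lem:vol_length_bound} with Lemma~\ref{lem:bounding_pq}. The elimination step is carried out correctly.

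However, there is a genuine error in the final step. Lemma~\ref{lem:vol_length_bound} bounds the length of the slope measured in the cusp of the \emph{larger}-volume knot: with $\vol(K')>3.07>\vol(12n242)$, the roles in the lemma put $K'$ in the position of the large-volume knot, so the conclusion is a bound on $\ell_{K'}(p/q)$, not on $\ell_K(p/q)$ as you assert. Indeed, no upper bound on $\ell_K(p/q)$ is obtainable this way: the Futer--Kalfagianni--Purcell inequality applied to $K$ gives $\vol(K)\bigl(1-(2\pi/\ell_K)^2\bigr)^{3/2}<\vol(K')$, which is vacuous when $\vol(K')>\vol(K)$. This is not merely cosmetic, because Lemma~\ref{lem:bounding_pq} ties the length and the genus to the \emph{same} knot; your final line pairs $\ell_K(p/q)$ with $2g(K')-1$, which is not a valid instance of that lemma. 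Applied to $K$ it would produce $g(K)$ in the bound, whereas the claim (and its later use) requires $g(K')$. The fix is simply to measure the slope length on $K'$ throughout: $\ell_{K'}(p/q)<2\pi/\sqrt{1-(\vol(12n242)/3.07)^{2/3}}<27.34$, and then Lemma~\ref{lem:bounding_pq} applied to $K'$ gives $|q|\leq 1.79\,\ell_{K'}(p/q)<49$ and $|p|\leq 1.79\,\ell_{K'}(p/q)(2g(K')-1)<49(2g(K')-1)$. One further caution: your proposed constant $27.4$ is too generous, since $1.79\times 27.4>49$; the margin only closes with the sharper bound $27.34$, as $1.79\times 27.34<49$.
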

\begin{proof}
Suppose that $K'$ is a hyperbolic knot. Condition \eqref{eq:length_hypothesis} eliminates the possibility that $K'$ is $4_1$. By consideration of the Casson-Walker invariant as in Lemma~\ref{lem:CW_obstruction}, we see that $K'$ is not $5_2$ or $m5_2$. Using the $\nu^+$ invariant as in Lemma~\ref{lem:nu+application}, we see that $K'$ is not $m12n242$. Thus having exhausted all the low volume knots in Theorem~\ref{thm:low_vol_knots}, we may conclude that $\vol(K')>3.07$. Thus by Lemma~\ref{lem:vol_length_bound} we have the bound
\begin{equation*}\label{eq:hyp_length_bound}
\ell_{K'}(p/q)<\frac{2\pi}{\sqrt{1-\left(\frac{\vol(12n242)}{3.07}\right)^{\frac23}}}<27.34.
\end{equation*}
Using Lemma~\ref{lem:bounding_pq}, this yields the required bound.
\end{proof}
%\begin{equation}\label{eq:hyperbolic_bound}
%q<49 \quad  \text{and} \quad |p|< 49(2g(K')-1),
%\end{equation}
%when $K'$ is hyperbolic.
\begin{claim}\label{cl:sat_q_geq_2}
If $K'$ is a satellite knot and $q\geq 2$, then
\begin{equation*}\label{eq:satellite_bound_q_geq2}
q<49 \quad  \text{and} \quad |p|<  49(2g(K')-1).
\end{equation*}
\end{claim}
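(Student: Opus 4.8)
The plan is to extract a hyperbolic companion of $K'$ and then run the volume-and-length machinery of Claim~\ref{cl:hyp_bound}, supplemented by the Casson--Walker cabling obstruction. First I would invoke Lemma~\ref{lem:satellite}. Since $S_{p/q}^3(K')\cong S_{p/q}^3(K)$ is hyperbolic and $q\geq 2$, there is a hyperbolic knot $J$ with $g(J)<g(K')$ and an integer $w\geq 2$ such that $K'$ is a cable of $J$ with winding number $w$, and moreover
\[
S_{p/q}^3(K)\cong S_{p/(qw^2)}^3(J).
\]
The argument then splits according to the volume of the companion $J$.

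If $\vol(J)>3.07$, I would treat $J$ exactly as the large-volume knot in Claim~\ref{cl:hyp_bound}. Since $\vol(12n242)<3.07<\vol(J)$, Lemma~\ref{lem:vol_length_bound} applied to the pair $J$ and $K=12n242$ bounds the length of the surgery slope on $J$ by the very same quantity,
\[
\ell_{J}(p/(qw^2))<\frac{2\pi}{\sqrt{1-\left(\frac{\vol(12n242)}{3.07}\right)^{\frac23}}}<27.34 .
\]
Feeding this into Lemma~\ref{lem:bounding_pq} gives $|qw^2|<1.79\cdot 27.34<49$ and $|p|<49(2g(J)-1)$. Since $w\geq 2$ yields $4q\leq qw^2<49$, we get $q<49$; and since $g(J)<g(K')$ gives $2g(J)-1<2g(K')-1$, the bound on $|p|$ follows as well.

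The remaining case is $\vol(J)\leq 3.07$, where Theorem~\ref{thm:low_vol_knots} forces $J\in\{4_1,5_2,12n242,m5_2,m12n242\}$, and this is where I expect the actual work to be. The companion $4_1$ is eliminated exactly as in the main argument: if $J=4_1$, then Lemma~\ref{lem:vol_length_bound} applied to $12n242$ and $4_1$ would force $\ell_{12n242}(p/q)<14.17$, contradicting the standing hypothesis \eqref{eq:length_hypothesis}. For the four remaining possibilities I would instead apply Lemma~\ref{lem:CW_satellite_obstruction}: as $K'$ is a non-trivial cable of $J$, there must exist coprime integers $r,s$ with $s\geq 2$ satisfying
\[
24=\Delta_{12n242}''(1)=\frac{(r^2-1)(s^2-1)}{12}+s^2\,\Delta_J''(1).
\]
Using $\Delta_{5_2}''(1)=\Delta_{m5_2}''(1)=4$ and $\Delta_{12n242}''(1)=\Delta_{m12n242}''(1)=24$ (the Alexander polynomial being insensitive to mirroring), together with $r^2\geq 1$ and $s^2\geq 4$, one checks directly that none of these equations admits an integer solution: the $\Delta_J''(1)=24$ cases already force the right-hand side to be at least $96$, while for $\Delta_J''(1)=4$ the only candidate $s=2$ needs $r^2=33$.

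The main obstacle is precisely this low-volume case, and in particular the observation that $4_1$ cannot be dispatched by the cabling obstruction: because $\Delta_{4_1}''(1)=-2$ is negative, the corresponding equation does admit solutions, so $4_1$ must be excluded by the length hypothesis instead, just as it was excluded at the very start of the argument. Everything else reduces to the bookkeeping already established for the hyperbolic case.
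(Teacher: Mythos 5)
Your proposal is correct and follows essentially the same route as the paper: apply Lemma~\ref{lem:satellite} to pass to a hyperbolic companion $J$ with $g(J)<g(K')$ and denominator $qw^2>q$, eliminate $4_1$ via the length hypothesis \eqref{eq:length_hypothesis}, eliminate the other four low-volume knots via Lemma~\ref{lem:CW_satellite_obstruction} (your arithmetic checks of the two equations match the paper's), and then run the $27.34$ length bound through Lemma~\ref{lem:bounding_pq}. Your added remark that $\Delta_{4_1}''(1)=-2$ makes the Casson--Walker cabling equation solvable for $4_1$ (e.g.\ $(r,s)=(8,3)$), so that companion genuinely must be excluded by the length hypothesis, is a nice observation consistent with how the paper structures the argument.
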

\begin{proof} Suppose that $K'$ is a satellite knot and that $q\geq 2$. By \eqref{eq:length_hypothesis}, the manifold $S_{p/q}^3(K)$ is hyperbolic and Lemma~\ref{lem:satellite} applies to show that $K'$ is a cable of a hyperbolic knot $J$ such that $g(J)<g(K')$ and $S_{p/q'}^3(J)\cong S_{p/q}^3(K)$ for some $q'>q$. By the assumption \eqref{eq:length_hypothesis} we see that $J$ is not $4_1$. Furthermore, applying the Casson-Walker invariant as in Lemma~\ref{lem:CW_satellite_obstruction}, we see that $J$ cannot be $5_2$, $m5_2$, $12n242$ or $m12n242$. This is because there are no non-trivial integer solutions with $s\geq 2$ to the equations:
 \[
 24=\frac{(r^2-1)(s^2-1)}{12}+4s^2
 \] 
 and
  \[
 24=\frac{(r^2-1)(s^2-1)}{12}+24s^2.
 \]
Thus, having ruled out all the knots of small volume in Theorem~\ref{thm:low_vol_knots}, the only remaining possibility is that $J$ must be a knot with $\vol(J)> 3.07$. Thus by Lemma~\ref{lem:vol_length_bound} we have the bound
\begin{equation*}\label{eq:sat_length_bound1}
\ell_{J}(p/q')<\frac{2\pi}{\sqrt{1-\left(\frac{\vol(12n242)}{3.07}\right)^{\frac23}}}<27.34.
\end{equation*}
Applying Lemma~\ref{lem:bounding_pq} along with the inequalities $q<q'$ and $g(J)<g(K')$ give the required bounds.
%\begin{equation}\label{eq:satellite_bound_q_geq2}
%q\leq 48 \quad  \text{and} \quad |p|< 49(2g(J)-1)<  49(2g(K')-1).
%\end{equation}
%when $q\geq 2$ and $K'$ is a satellite knot.
\end{proof}
\begin{claim}\label{cl:sat_pq_geq_9}
If $K'$ is a satellite knot and $p/q\geq 9$, then
\begin{equation*}\label{eq:satellite_bound_lspace}
|p|<49(2g(K')-1),
\end{equation*}
\end{claim}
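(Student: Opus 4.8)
The plan is to handle the case where $K'$ is a satellite knot and $p/q \geq 9$, paralleling the structure of Claim~\ref{cl:sat_q_geq_2} but now allowing the possibility that $q=1$. The key difference is that when $q \geq 2$, Lemma~\ref{lem:satellite} guarantees $K'$ is a \emph{cable}, which is what powers the Casson--Walker obstruction in Lemma~\ref{lem:CW_satellite_obstruction}. When $q=1$, however, Lemma~\ref{lem:satellite} only tells us that $K'$ is a $1$-bridge braid in its companion solid torus, so the cabling-specific obstruction need not apply directly. Thus I would split the argument according to whether $q \geq 2$ or $q=1$.

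For the case $q \geq 2$ (combined with $p/q \geq 9$, so in particular $p \geq 18$), the argument of Claim~\ref{cl:sat_q_geq_2} applies essentially verbatim: the length hypothesis \eqref{eq:length_hypothesis} forces $S^3_{p/q}(K)$ to be hyperbolic, Lemma~\ref{lem:satellite} produces a hyperbolic companion $J$ with $g(J) < g(K')$ and $S^3_{p/q'}(J) \cong S^3_{p/q}(K)$ for some $q' > q$, the low-volume knots are eliminated (the Casson--Walker equations from Lemma~\ref{lem:CW_satellite_obstruction} have no admissible solutions, and $4_1$ is ruled out by the length bound), and then Lemma~\ref{lem:vol_length_bound} gives $\ell_J(p/q') < 27.34$. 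Applying Lemma~\ref{lem:bounding_pq}\eqref{it:p_bound} with $g(J) < g(K')$ yields $|p| < 49(2g(K')-1)$. The point is that we only need the bound on $p$ here, not on $q$, which is why the weaker hypothesis $p/q \geq 9$ suffices.

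The genuinely new work is the case $q=1$, i.e.\ integer surgery with $p \geq 9$. Here I would still invoke Lemma~\ref{lem:satellite} to obtain a hyperbolic companion $J$ with $g(J) < g(K')$ and an integer $w > 1$ such that $S^3_{p}(K) \cong S^3_{p/w^2}(J)$. The length bound $\ell_K(p/1) \geq 14.17$ again makes $S^3_p(K)$ hyperbolic and rules out $J = 4_1$ via Lemma~\ref{lem:vol_length_bound}. To eliminate the remaining low-volume companions $5_2, m5_2, 12n242, m12n242$, I expect to use the structural constraint that integer surgeries on $1$-bridge braids yielding solid-torus fillings are heavily restricted (Gabai's classification), together with the observation that $p/w^2$ must be the surgery slope on $J$ producing a hyperbolic manifold of the same volume; since the surgery dual slope is $p/w^2 \geq 9/w^2$, one can bound $w$ and feed the resulting slope into the $\nu^+$ obstruction of Lemma~\ref{lem:nu+application} (for $m12n242$) or note that the relevant $d$-invariant and Casson--Walker comparisons still obstruct the low-volume companions. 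Having excluded them, $\vol(J) > 3.07$, and Lemma~\ref{lem:vol_length_bound} gives $\ell_J(p/w^2) < 27.34$; then Lemma~\ref{lem:bounding_pq}\eqref{it:p_bound} applied to $J$ gives $|p| < 49(2g(J)-1) < 49(2g(K')-1)$.

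The main obstacle I anticipate is the $q=1$ subcase, specifically ruling out the low-volume knots as companions $J$ when $K'$ need not be a cable. The clean Casson--Walker cabling obstruction of Lemma~\ref{lem:CW_satellite_obstruction} is unavailable, so I will have to argue more carefully—most likely leveraging that the dual slope $p/w^2$ remains positive and bounded below (using $p \geq 9$), which keeps the relevant surgeries within the regime where the $\nu^+$ and $d$-invariant obstructions apply, and where Gabai's $1$-bridge braid classification constrains which $(J,w)$ can even arise. Verifying that no low-volume $J$ survives these combined constraints is where the real care is needed; everything else is a routine repackaging of the earlier claims.
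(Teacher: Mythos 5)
Your $q\ge 2$ branch is fine (it is just Claim~\ref{cl:sat_q_geq_2} again), but the whole point of Claim~\ref{cl:sat_pq_geq_9} is to cover $q=1$, and there your argument has a genuine gap: you never actually eliminate the low-volume companions. The tools you propose do not apply as stated. Lemma~\ref{lem:nu+application} compares $S^3_{p/q}(K)$ with $S^3_{p/q}(mK)$ at the \emph{same} slope, whereas here you would need to compare $S^3_p(K)$ with $S^3_{p/w^2}(J)$ at a different slope on a possibly different knot; the Casson--Walker cabling obstruction of Lemma~\ref{lem:CW_satellite_obstruction} needs $\Delta_P''(1)$ for the pattern, which is uncontrolled for a general $1$-bridge braid; and the inequality $p/w^2\ge 9/w^2$ gives no bound on $w$ whatsoever. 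You flag this subcase as ``where the real care is needed'' but do not supply an argument, so the claim is not proved.

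The paper closes the $q=1$ case by a different and much cleaner mechanism, which is exactly what the hypothesis $p/q\ge 9=2g(K)-1$ is for: since $K=12n242$ is an $L$-space knot, $S^3_{p/q}(K)$ is an $L$-space, so the hyperbolic companion $J$ produced by Lemma~\ref{lem:satellite} admits a positive $L$-space surgery and is therefore an $L$-space knot. None of $4_1$, $5_2$, $m5_2$, $m12n242$ is an $L$-space knot, so they are excluded at a stroke, with no need for $\nu^+$, $d$-invariants, or any analysis of the $1$-bridge braid pattern. The one remaining low-volume candidate, $J=12n242$ itself, is ruled out by the Casson--Walker surgery formula (via \cite[Proposition~5.1]{Boyer1990Surgery}, using $\Delta_K''(1)=24\neq 0$ and the fact that the dual slope $p/q'$ has $q'>q$). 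After that, $\vol(J)>3.07$ and the length and genus bounds go through as you describe. I would recommend replacing your $q=1$ sketch with this $L$-space argument; note that it also handles $q\ge 2$ uniformly, so the case split is unnecessary.
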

\begin{proof}
Suppose that $K'$ is a satellite knot and $p/q\geq 2g(K)-1=9$. Since $K$ is an $L$-space knot, this implies that $S_{p/q}^3(K)$ is a hyperbolic $L$-space. By Lemma~\ref{lem:satellite} there is a hyperbolic knot $J$ such that $g(J)<g(K')$ and $S_{p/q'}^3(J)\cong S_{p/q}^3(K)$ for some $q'>q$. Since $\Delta_{K}''(1)\neq 0$, \cite[Proposition~5.1]{Boyer1990Surgery} shows that $J$ is not $12n242$. Furthermore, since $S_{p/q'}^3(J)$ is an $L$-space and none of $4_1, 5_2, m5_2$ or $m12n242$ are $L$-space knots, Theorem~\ref{thm:low_vol_knots} allows us to conclude that $\vol(J)>3.07$. Thus as before we arrive at the bounds
\begin{equation*}\label{eq:sat_length_bound2}
\ell_{J}(p/q')<\frac{2\pi}{\sqrt{1-\left(\frac{\vol(12n242)}{3.07}\right)^{\frac23}}}<27.34
\end{equation*}
Applying Lemma~\ref{lem:bounding_pq}\eqref{it:p_bound} and $g(J)<g(K')$ gives the required bounds.
%\begin{equation}\label{eq:satellite_bound_lspace}
%|p|< 49(2g(J)-1)<  49(2g(K')-1),
%\end{equation}
%when $K'$ is a satellite knot and $p/q\geq 9$.
\end{proof}
We now convert these statements into results on characterizing slopes. The bound $q\geq 49$ is straight forward.
\begin{claim}
The slope $p/q$ is a characterizing slope for $K$ whenever $q\geq 49$.
\end{claim}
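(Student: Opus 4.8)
The claim to prove is that $p/q$ is characterizing for $K = 12n242$ whenever $q \geq 49$. The strategy is to argue by contradiction: suppose $p/q$ with $q \geq 49$ is non-characterizing, so there exists $K' \neq K$ with $S^3_{p/q}(K) \cong S^3_{p/q}(K')$. I need to derive a contradiction. The first thing to verify is that the length hypothesis \eqref{eq:length_hypothesis} is satisfied, since all three Claims above rely on it. By Lemma~\ref{lem:bounding_pq}\eqref{it:q_bound}, $|q| \leq 1.79\,\ell_K(p/q)$, so $q \geq 49$ forces $\ell_K(p/q) \geq 49/1.79 > 27.37 > 14.17$, which establishes \eqref{eq:length_hypothesis}. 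Hence $S^3_{p/q}(K)$ is hyperbolic and the machinery of the preceding claims applies.

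**Carrying out the case analysis.**

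Once \eqref{eq:length_hypothesis} holds, I invoke Thurston's trichotomy for $K'$: it is a torus knot, a hyperbolic knot, or a satellite knot. Torus knots never produce hyperbolic surgeries, so that case is immediately eliminated. If $K'$ is hyperbolic, then Claim~\ref{cl:hyp_bound} applies and gives $q < 49$, directly contradicting $q \geq 49$. If $K'$ is a satellite knot, then since $q \geq 49 \geq 2$, Claim~\ref{cl:sat_q_geq_2} applies and likewise yields $q < 49$, again a contradiction. In every case the assumption $q \geq 49$ is inconsistent with the existence of a distinct $K'$, so no such $K'$ exists and $p/q$ is characterizing.

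**The main obstacle.**

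The genuine content has already been front-loaded into Claims~\ref{cl:hyp_bound} and \ref{cl:sat_q_geq_2}, so the remaining work is essentially bookkeeping: the only subtlety is confirming that the quantitative threshold $49$ is exactly the round number emerging from the length bound $27.34$ via the factor $1.79$ in Lemma~\ref{lem:bounding_pq}. Indeed, $1.79 \times 27.34 \approx 48.94 < 49$, which is precisely why $q < 49$ (equivalently $q \leq 48$) is the sharp conclusion in both claims and why the hypothesis $q \geq 49$ cleanly closes every branch. I expect no real difficulty here; the proof is a short synthesis of the two satellite/hyperbolic claims, and the main thing to get right is simply checking that $q \geq 49$ triggers \eqref{eq:length_hypothesis} before the trichotomy is applied.
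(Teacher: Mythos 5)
Your proof is correct and follows the paper's argument exactly: verify that $q\geq 49$ forces $\ell_K(p/q)\geq 49/1.79>14.17$ via Lemma~\ref{lem:bounding_pq}\eqref{it:q_bound}, then combine Claim~\ref{cl:hyp_bound} and Claim~\ref{cl:sat_q_geq_2} to rule out every case of Thurston's trichotomy for $K'$. The paper states this more tersely, but the content is identical.
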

\begin{proof}
Together Claim~\ref{cl:hyp_bound} and Claim~\ref{cl:sat_q_geq_2} show that $p/q$ is a charactering slope for $K$ whenever $\ell_{K}(p/q)\geq 14.17$ and $q\geq 49$. However, Lemma~\ref{lem:bounding_pq}\eqref{it:q_bound} shows that $\ell_{K}(p/q)\geq 14.17$ is automatically satisfied whenever $q\geq 49$.
\end{proof}
%Combining \eqref{eq:hyperbolic_bound} and \eqref{eq:satellite_bound_q_geq2} we see that $p/q$ is a characterizing slope for $K$ whenever $\ell_{K}(p/q)\geq 14.17$ and $|q|\geq 49$. However Lemma~\ref{lem:bounding_pq}\eqref{it:q_bound} shows that if $|q|\geq 49$, then $\ell_K(p/q)\geq \frac{49}{1.79}>  14.17$. This makes the latter condition redundant and shows that $p/q$ is a characterizing slope for $K$ whenever $q\geq 49$.
In order to obtain the other conditions on charactering slopes, we need to invoke results linking the genera of $K$ and $K'$
\begin{claim}
The slope $p/q$ is a characterizing slope for $K$ whenever $p\geq \max\{24q, 441\}$.
\end{claim}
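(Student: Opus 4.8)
The plan is to suppose, for contradiction, that a slope $p/q$ with $p \geq \max\{24q, 441\}$ is non-characterizing, so that there is a knot $K' \neq K$ with $S^3_{p/q}(K) \cong S^3_{p/q}(K')$. First I would check that the length hypothesis \eqref{eq:length_hypothesis} holds automatically: since $g(K)=5$, Lemma~\ref{lem:bounding_pq}\eqref{it:p_bound} gives $\ell_K(p/q) \geq p/(1.79\cdot 9) > 14.17$ because $p\geq 441$. Consequently $S^3_{p/q}(K)$ is hyperbolic and $K'$ is neither $4_1$ nor (the filling being hyperbolic) a torus knot. Since $p/q \geq 24 \geq 2g(K)-1 = 9$ and $K$ is an $L$-space knot, $Y := S^3_{p/q}(K)$ is moreover an $L$-space. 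Feeding $p/q \geq 9$ into Claim~\ref{cl:hyp_bound} when $K'$ is hyperbolic, or into Claim~\ref{cl:sat_pq_geq_9} when $K'$ is a satellite, yields in both cases
\[
p < 49(2g(K') - 1).
\]
Combined with $p \geq 441 = 49\cdot 9$ this forces $2g(K')-1 > 9$, that is $g(K') \geq 6$.

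Next I would bound $g(K')$ from above. Since $S^3_{p/q}(K') \cong Y$ is an $L$-space and $p/q > 0$, the knot $K'$ admits a positive $L$-space surgery and is therefore an $L$-space knot; in particular $\Delta_{K'}(t) = (-1)^k + \sum_{j=1}^k (-1)^{k-j}(t^{n_j} + t^{-n_j})$ for some $0 < n_1 < \cdots < n_k = g(K')$. Lemma~\ref{lem:CW_obstruction} gives $\Delta''_{K'}(1) = \Delta''_K(1) = 24$, and since $\Delta''_{K'}(1) = 2\sum_{j=1}^k (-1)^{k-j} n_j^2$ the alternating sum equals $12$. A short estimate shows this sum is at least $n_k^2 - n_{k-1}^2 \geq 2g(K')-1$, so $2g(K')-1 \leq 12$ and $g(K') \leq 6$. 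Hence $g(K') = 6$, and the only solution of $\sum_j (-1)^{k-j} n_j^2 = 12$ with $n_k = 6$ is $\{n_1, n_2, n_3\} = \{1, 5, 6\}$. This pins the Alexander polynomial down to $\Delta_{K'}(t) = t^6 - t^5 + t - 1 + t^{-1} - t^{-5} + t^{-6}$, whose torsion coefficients give the Ni--Wu sequence $(V_0(K'), V_1(K'), \dots) = (2,1,1,1,1,1,0,\dots)$, differing from $(V_0(K), V_1(K), \dots) = (2,2,1,1,1,0,\dots)$.

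The final step is to derive a contradiction from these two different $V$-sequences via the surgery formula \eqref{eq:NiWuformula}. Writing the correction term at the $i$-th spin$^c$ structure as $2V_{m(i)}$ with $m(i) = \min\{\lfloor i/q\rfloor, \lceil (p-i)/q\rceil\}$, the hypothesis $p \geq 24q$ ensures that the profile $i \mapsto m(i)$ rises above $g(K') = 6$, since its maximum is roughly $p/(2q) \geq 12$; thus every correction $2V_0, \dots, 2V_5$ actually occurs. Computing the multiset $\{d(Y,\mathfrak s)\}_{\mathfrak s}$ of $d$-invariants of $Y$ from each of the two surgery descriptions and comparing, I would show that the distinct $V$-sequences produce distinct multisets, contradicting $S^3_{p/q}(K) \cong S^3_{p/q}(K')$ and completing the proof.

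The step I expect to be the main obstacle is precisely this last comparison. The formula \eqref{eq:NiWuformula} computes $d(Y,\cdot)$ only relative to the surgery-specific identification of $\spinc(Y)$ with $\{0, \dots, p-1\}$, and the homeomorphism between the two descriptions of $Y$ may permute spin$^c$ structures by an affine symmetry of the linking form of $S^3_{p/q}(U)$. The real work is to show that no such symmetry can reconcile the two correction profiles: one must track the lens-space $d$-invariants $d(S^3_{p/q}(U), i)$ together with the corrections, rather than the corrections in isolation, and exploit the fact that the two profiles already differ among the most negative $d$-invariants, where the largest correction $2V_0 = 4$ is attained on a different number of spin$^c$ structures.
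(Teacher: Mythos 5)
Your reduction to the two Claims matches the paper's, and your opening observations are all correct: the length hypothesis \eqref{eq:length_hypothesis} is automatic once $p\geq 441$, Claims~\ref{cl:hyp_bound} and~\ref{cl:sat_pq_geq_9} give $p<49(2g(K')-1)$ in both cases, and together with $p\geq 441=49\cdot 9$ this forces $g(K')\geq 6$. Where you diverge from the paper is in how $g(K')$ is controlled from above. The paper gets this in one step from sharp $4$-manifolds: since $S^3_{18}(K)$ is a lens space, $S^3_{p/q}(K)$ bounds a sharp $4$-manifold for $p/q\geq 18$, and then \cite[Theorem~1.1]{McCoy2021sharp} gives $g(K')=g(K)=5$ whenever $p/q\geq 4g(K)+4=24$, immediately contradicting $g(K')\geq 6$. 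Your substitute --- $K'$ is an $L$-space knot, so $\Delta_{K'}$ has the Ozsv\'ath--Szab\'o staircase form, and Lemma~\ref{lem:CW_obstruction} forces $\sum_j(-1)^{k-j}n_j^2=12$ --- only yields $g(K')\leq 6$, and the extremal case $g(K')=6$ with $\{n_j\}=\{1,5,6\}$ genuinely survives every obstruction assembled up to that point. (Your computations here, including the uniqueness of $\{1,5,6\}$ and the two $V$-sequences $(2,2,1,1,1,0,\dots)$ and $(2,1,1,1,1,1,0,\dots)$, are correct.)

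The genuine gap is that eliminating this last case is left as a sketch, and it is not a routine verification. The one quantity that sidesteps the spin$^c$-identification problem --- the sum $\sum_{\mathfrak s}d(Y,\mathfrak s)$ --- fails to distinguish the two candidates: setting $c_j=\#\{i:\min\{\lfloor i/q\rfloor,\lceil(p-i)/q\rceil\}=j\}$, one checks that $c_j=2q$ for all $1\leq j\leq 5$ once $p/q\geq 24$, so the discrepancy $c_1\bigl(V_1(K)-V_1(K')\bigr)+c_5\bigl(V_5(K)-V_5(K')\bigr)=c_1-c_5$ vanishes. You are therefore forced into the full multiset comparison you describe, which requires explicit control of $d(S^3_{p/q}(U),i)$ for arbitrary $p/q\geq 24$ together with an argument that no self-identification of $\spinc(Y)$ reconciles the two correction profiles; you flag this as the main obstacle but do not carry it out, and it is exactly where the content of the claim lies. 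As written, your argument establishes the claim only for $p\geq\max\{24q,\,49\cdot 11\}=\max\{24q,539\}$, where $g(K')=6$ already contradicts $p<49(2g(K')-1)$, rather than for the stated threshold $441$. To close the gap you must either complete the $d$-invariant computation or import a genus bound of the kind the paper uses.
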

\begin{proof}
Since $S_{18}^3(K)$ is a lens space, it bounds a sharp 4-manifold. Thus \cite[Theorem~1.2]{McCoy2021sharp} applies to show that $S_{p/q}^3(K)$ bounds a sharp 4-manifold for all $p/q\geq 18$. In particular, we may apply \cite[Theorem~1.1]{McCoy2021sharp} to show that if $p/q\geq 4g(K)+4=24$, then $g(K')=g(K)=5$. Thus Claim~\ref{cl:hyp_bound} and Claim~\ref{cl:sat_pq_geq_9} imply that $p/q$ is a characterizing slope for $K$ whenever the conditions $p\geq 24q$, $p\geq 49(2g(K)-1)=441$ and $\ell_{K}(p/q)\geq 14.17$ are all satisfied. Lemma~\ref{lem:bounding_pq}\eqref{it:p_bound} shows that the bound $\ell_{K}(p/q)\geq 14.17$ is redundant, being implied by $p\geq 441$. Thus we have a characterizing slope for $K$ if $p\geq 24q$ and $p\geq 441$.
\end{proof}
\begin{claim}
The slope $p/q$ is a characterizing slope for $K$ whenever
\[q\geq 2\quad  \text{and}\quad p\leq - \max\{12+4q^2-2q,441 \}.\]
\end{claim}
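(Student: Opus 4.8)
The plan is to run the same contradiction argument as in the previous three claims, the one new ingredient being that a negative slope forces me to bound the genus of $K'$ from \emph{above}; this is where negative slopes are genuinely harder than positive ones. Write $P=-p$, so the hypotheses read $q\geq 2$ and $P\geq\max\{12+4q^2-2q,\,441\}$. First I would observe that the length hypothesis \eqref{eq:length_hypothesis} is automatic: since $2g(K)-1=9$, Lemma~\ref{lem:bounding_pq}\eqref{it:p_bound} gives $\ell_K(p/q)\geq P/(1.79\cdot 9)\geq 441/16.11>14.17$. Hence if $p/q$ were non-characterizing there would be a knot $K'\neq K$ with $S^3_{p/q}(K)\cong S^3_{p/q}(K')$, and by Thurston's trichotomy (torus knots being excluded since the surgery is hyperbolic) the knot $K'$ is hyperbolic or satellite.

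Feeding this $K'$ into the bounds already proved, and using $q\geq 2$, Claim~\ref{cl:hyp_bound} (in the hyperbolic case) and Claim~\ref{cl:sat_q_geq_2} (in the satellite case) both give $|p|<49(2g(K')-1)$. Together with $P=|p|\geq 441=49\cdot 9$ this yields $2g(K')-1>9$, i.e. $g(K')\geq 6>5=g(K)$. The claim will therefore follow at once if I can prove the reverse inequality $g(K')\leq g(K)$, and the entire force of the second hypothesis $P\geq 12+4q^2-2q$ should be exactly what guarantees it.

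The crux is thus a genus-detection step for negative slopes: I would show that $P\geq 12+4q^2-2q$ forces $g(K')=g(K)=5$, contradicting $g(K')\geq 6$. On the positive side one exploits that $S^3_{18}(K)$ is a lens space, whence $S^3_{p/q}(K)$ bounds a sharp $4$-manifold and \cite{McCoy2021sharp} pins down $g(K')$ under the \emph{linear} hypothesis $p/q\geq 4g(K)+4$. That shortcut is unavailable here: under the mirror identification $S^3_{p/q}(K)\cong -S^3_{-p/q}(mK)$ the relevant filling is a \emph{positive} surgery on $mK=m12n242$, and since $mK$ is not an $L$-space knot none of these surgeries is a lens space. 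Lacking a sharp filling, I would instead read the genus off the rational surgery formula \eqref{eq:NiWuformula}: requiring $S^3_{p/q}(K)$ and $S^3_{p/q}(K')$ to agree as graded spin$^c$ manifolds (not merely at the level of $d$-invariants, which here already force $V_i(mK')=0$ for every $i$ but cannot by themselves see the genus) rigidly constrains the full knot Floer package of $K'$, and once $P$ is large enough relative to $q$ these constraints reach the top Alexander grading and recover $g(K')$. Establishing that the \emph{precise} quadratic threshold $P\geq 12+4q^2-2q$ is what makes this rigidity take hold — the quadratic, rather than linear, dependence on $q$ being the price of losing the sharp $4$-manifold input — is the main obstacle and the technical heart of the argument.
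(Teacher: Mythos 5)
Your reduction is exactly the paper's: verify that $|p|\geq 441$ forces the length hypothesis \eqref{eq:length_hypothesis} via Lemma~\ref{lem:bounding_pq}\eqref{it:p_bound}, feed $K'$ into Claim~\ref{cl:hyp_bound} and Claim~\ref{cl:sat_q_geq_2} (the latter available because $q\geq 2$) to get $|p|<49(2g(K')-1)$, and observe that the whole claim collapses to showing $g(K')=g(K)=5$. You have also correctly diagnosed that the sharp $4$-manifold route used for positive slopes is unavailable here and that the quadratic threshold $12+4q^2-2q$ must be exactly the price of the genus-detection statement for negative slopes.

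But that genus-detection statement is the entire content of the claim, and you do not prove it --- you explicitly label it ``the main obstacle and the technical heart of the argument.'' The paper does not prove it either from scratch: it cites \cite[Theorem~1.8(ii)]{McCoy2020torus_char}, which states precisely that for $q\geq 2$ and $p\leq\min\{2q-12-4q^2,-10q\}$ one has $g(K')=g(K)$; since $12+4q^2-2q>10q$ for all $q$, your hypothesis puts you in the range of that theorem and the contradiction with $g(K')\geq 6$ is immediate. Your proposed substitute --- extracting $g(K')$ from the surgery formula \eqref{eq:NiWuformula} alone --- is unlikely to close the gap as described: as you yourself note, for positive surgeries on $mK$ the $d$-invariants only force $V_i(mK')=0$ for all $i$ and are blind to the genus, and no amount of ``graded spin$^c$ rigidity'' beyond the $d$-invariants is set up in this paper. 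The cited theorem gets the quadratic threshold from additional input (genus bounds for negative surgeries combined with the Alexander polynomial/Casson--Walker data), not from \eqref{eq:NiWuformula}. So the proposal is a correct skeleton with the load-bearing step missing; to complete it you must either invoke \cite[Theorem~1.8(ii)]{McCoy2020torus_char} or actually supply a proof of the negative-slope genus bound.
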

\begin{proof}
By \cite[Theorem~1.8(ii)]{McCoy2020torus_char} we see that if $q\geq 2$ and $p\leq \min\{2q-12-4q^2, -10q\}$, then $g(K')=g(K)=5$. Thus Claim~\ref{cl:hyp_bound} and Claim~\ref{cl:sat_q_geq_2} imply that that $p/q$ is a characterizing slope for $K$ if the conditions $q\geq 2$, $p\leq -\max\{12+4q^2-2q, 10q\}$, $p\leq -441$ and $\ell_{K}(p/q)\geq 14.17$ are all satisfied. Since $12+4q^2-2q> 10q$ for all $q$ and the condition $p\leq -441$ implies $\ell_{K}(p/q)\geq 14.17$, we see that the conditions $p\leq -12+4q^2-2q$, $q\geq 2$ and $p\leq -441$ are sufficient to imply that $p/q$ is a characterizing slope for $K$.
\end{proof}
This completes the proof of all bounds in Theorem~\ref{thm:12n242}.

\section{Constructing some non-characterizing slopes}\label{sec:construction}
In this section we construct some examples of knots with non-characterizing slopes with arbitrarily large denominator. The generic construction is the following. Let $L=C'\cup K'$ be a link with two unknotted components and linking number $\lk(C',K')=\omega$. Let $Y$ be the manifold obtained by performing $1/n$-surgery on both components on $L$ for some non-zero integer $n\in \Z$. Since $C'$ and $K'$ are both unknotted, performing $1/n$ surgery on one or other of them individually again results in $S^3$. Performing such a surgery shows that $Y$ arises by $(n\omega^2+ \frac{1}{n})$-surgery on the knots $K$ and $C$, where $K$ is the image of $K'$ in the copy of $S^3$ obtained by surgering $C'$ and $C$ is the image of $C'$ after surgering $K'$. If one chose $L$ wisely, then the knots $K$ and $C$ will be distinct and thus the slope $n\omega^2+ \frac{1}{n}$ will be non-characterizing for $K$ and $C$.

Using this idea, we can prove the following.
\begin{prop}\label{prop:non-char}
Let $K$ be a knot with $g(K)\geq 2$ which can be unknotted by adding $q$ positive full twists along two oppositely oriented strands. Then $\frac{1}{q}$ is a non-characterizing slope for $K$.%\footnote{This corresponds to introducing $2q$ negative crossings.}
\end{prop}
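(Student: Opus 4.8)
The plan is to realize the hypothesis geometrically as a surgery description and then apply the generic construction described at the start of this section. First I would set up the two-component link $L = C' \cup K'$. The operation of ``adding $q$ positive full twists along two oppositely oriented strands'' is encoded by an unknotted circle $C'$ encircling those two strands, so that performing $-1/q$-surgery (or $+1/q$, depending on twist sign conventions) on $C'$ realizes precisely the $q$ full twists. The hypothesis says that after this twisting $K$ becomes the unknot, so $K'$ — the image of $K$ in the twisted copy of $S^3$ — is unknotted; this is exactly the requirement that both components of $L$ be unknots. Since the two strands being twisted are \emph{oppositely oriented}, the algebraic linking number of $C'$ with $K'$ is $\omega = 0$.

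\medskip

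With $\omega = 0$, the generic construction gives a manifold $Y$ obtained by $1/n$-surgery on both components, which is simultaneously $(n\omega^2 + 1/n)$-surgery $= 1/n$-surgery on each of $K$ and $C$. Setting $n = q$ (matching the twisting slope), this shows $S^3_{1/q}(K) \cong S^3_{1/q}(C)$, where $C$ is the image of $C'$ after the surgery on $K'$ that unknots $K$. So $1/q$ is a candidate non-characterizing slope, provided I can verify that $C \neq K$.

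\medskip

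The main obstacle is precisely showing $C \neq K$, i.e. that the construction does not accidentally return the original knot. Here is where the genus hypothesis $g(K) \geq 2$ enters. I would argue that $C$ is in fact the unknot, or at least has smaller genus than $K$, so that $C \neq K$ follows. The natural route is to observe that $C$ is the image of the \emph{unknotted} circle $C'$ after surgering $K'$; since $K'$ is unknotted and $C'$ is a small unknot linking two strands, the surgered picture makes $C$ into something simple — plausibly an unknot or a genus-one knot (a twist knot). Since $g(K) \geq 2$ strictly exceeds the genus of $C$, the two knots cannot coincide, and hence $S^3_{1/q}(K) \cong S^3_{1/q}(C)$ with $C \neq K$ witnesses that $1/q$ is non-characterizing for $K$.

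\medskip

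To make the genus bound on $C$ rigorous I would track $C'$ explicitly through the unknotting surgery on $K'$: because $C'$ bounds an obvious disk meeting $K'$ in two oppositely-oriented points, the surgery on $K'$ drags $C'$ to a knot whose complexity is controlled, and a direct Seifert-surface or Alexander-polynomial computation caps its genus well below $2$. The forward references to Figure~\ref{fig:2bridge_exam} suggest that the intended family $\{K_q\}$ arises by choosing $K$ to be a suitable two-bridge knot meeting these hypotheses, so the proof should conclude by noting that such $K$ exist for every $q$, thereby furnishing the promised examples of non-characterizing $1/q$-slopes with arbitrarily large denominator.
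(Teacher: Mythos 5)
Your setup is correct and matches the paper's: you encode the twisting by an unknotted circle $C'$ bounding a disk $D$ that meets $K'$ in two oppositely oriented points, note that both components of $L=C'\cup K'$ are unknots, observe $\omega=\lk(C',K')=0$ so that the generic construction yields $S^3_{1/q}(K)\cong S^3_{1/q}(C)$, and correctly identify that everything reduces to showing $C\neq K$. But that last step is exactly where your argument has a genuine gap. You assert that $C$ is ``plausibly an unknot or a genus-one knot'' and that ``a direct Seifert-surface or Alexander-polynomial computation caps its genus well below $2$,'' but the proposition is about an \emph{arbitrary} knot satisfying the hypothesis, not a specific diagram, so there is no explicit picture to compute from; and the claim that $C$ might be the unknot is too strong in general. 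Saying that the surgery ``drags $C'$ to a knot whose complexity is controlled'' is not an argument.

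The missing idea is a one-line geometric construction: since $D$ meets $K'$ in exactly two points of \emph{opposite} orientation, you can remove two small disks around those intersection points and attach a tube running along an arc of $K'$ between them. The result is an embedded genus-one surface $\Sigma$ with $\partial\Sigma=C'$ that is entirely \emph{disjoint} from $K'$. Because $\Sigma$ misses $K'$, it survives unchanged into the manifold obtained by $1/q$-surgery on $K'$, where it exhibits a genus-one Seifert surface for $C$. Hence $g(C)\leq 1<2\leq g(K)$, so $C\neq K$ and $1/q$ is non-characterizing. Without this tubing construction (or some substitute for it) your proof does not close; with it, your argument coincides with the paper's.
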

\begin{proof}
The hypothesis on unknotting implies that we can take a link $L=C'\cup K'$ with unknotted components such that (a) $K$ can be obtained from $K'$ by performing $1/q$-surgery on $C'$ and (b) $C'$ bounds a disk $D$ that intersects $K'$ in two oppositely oriented points. If we take the disk $D$ and add a tube that follows an arc of $K'$, we obtain an embedded genus one surface $\Sigma$ with boundary $C'$ which is disjoint from $K'$. Since $\Sigma$ is disjoint from $K'$, it is preserved under surgery on $K'$ and hence shows that the knot $C$ obtained by performing $1/q$ surgery on $K'$ has genus at most one. Since $K$ is assumed to have genus at least two, this implies that $C$ is not isotopic to $K$ and hence that $1/q$ is a non-characterizing slope for $K$.
\end{proof}
\begin{exam} Using the preceding proposition, we can show that for every $q\geq 1$, there is a hyperbolic 2-bridge knot $K_q$ for which $\frac{1}{q}$ is a non-characterizing slope. Figure~\ref{fig:2bridge_exam} depicts a two-bridge knot $K_q$ of genus two that can be unknotted by adding $q$ positive full twists along two oppositely-oriented strands. The genera of these knots can be easily verified, since Seifert's algorithm always yields a minimal genus Seifert surface when applied to an alternating diagram \cite{Crowell1959alternating, Murasugi1958alternating}. Thus Proposition~\ref{prop:non-char} applies to $K_q$.
\end{exam}
We also note that sufficiently complicated knots with unknotting number one must always have an non-characterizing slope. Since every slope is characterizing for the trefoil and the figure-eight knot \cite{Ozsvath2006trefoilsurgery}, we see that the condition on the genus cannot be relaxed.
\begin{cor}
Let $K$ be a knot with $g(K)\geq 2$ and $u(K)=1$.
\begin{itemize}
\item If $K$ can be unknotted by changing a positive crossing, then $+1$ is non-characterizing for $K$.
\item  If $K$ can be unknotted by changing a negative crossing, then $-1$ is non-characterizing for $K$.
\end{itemize} 
\end{cor}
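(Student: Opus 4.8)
The plan is to recognize each unknotting crossing change as the $q=1$ instance of the twisting operation in Proposition~\ref{prop:non-char}, and then to dispatch the two bullets by a mirroring argument. The starting point is the standard description of a crossing change by surgery on a crossing disk: if $K$ is unknotted by a single crossing change, then there is an unknot $C'$ bounding a disk $D$ with $D\cap K$ consisting of two points of \emph{opposite} sign, so that $\mathrm{lk}(C',K)=0$ and the two strands of $K$ passing through $D$ are oppositely oriented, and such that $\pm1$-surgery on $C'$ — equivalently, adding a single full twist along these two antiparallel strands — turns $K$ into the unknot. The sign of the surgery records whether the crossing being changed is positive or negative. This is precisely the $q=1$ case of the hypothesis of Proposition~\ref{prop:non-char}: $K$ can be unknotted by adding a single full twist along two oppositely oriented strands.

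For the first bullet, suppose $K$ is unknotted by changing a positive crossing. By the previous paragraph this says exactly that $K$ can be unknotted by adding one positive full twist along two oppositely oriented strands, so Proposition~\ref{prop:non-char} applies with $q=1$ — its genus hypothesis $g(K)\geq 2$ being part of our assumptions — and yields that $1/1=+1$ is a non-characterizing slope for $K$.

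For the second bullet, I would pass to the mirror. If $K$ is unknotted by changing a negative crossing, then $mK$ is unknotted by changing a positive crossing, since mirroring reverses the sign of every crossing and sends the unknot to the unknot; moreover $g(mK)=g(K)\geq 2$. Applying the first bullet to $mK$ shows that $+1$ is non-characterizing for $mK$, so there is a knot $K''\neq mK$ with $S^3_{+1}(mK)\cong S^3_{+1}(K'')$. Reversing orientation on both sides and using the identity $-S^3_{p/q}(J)\cong S^3_{-p/q}(mJ)$ gives $S^3_{-1}(K)\cong S^3_{-1}(mK'')$, where $mK''\neq K$ because $K''\neq mK$. Hence $-1$ is non-characterizing for $K$.

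The only delicate point — and the step I would be most careful about — is the sign and orientation bookkeeping: that the two strands at the unknotting crossing are genuinely antiparallel (so that $\mathrm{lk}(C',K)=0$ and the relevant slope is $1/q$ rather than $n\omega^2+\tfrac1n$ with $\omega=\pm2$), and that the conventions in force make ``positive crossing'' correspond to ``positive full twist'' and hence to the slope $+1$. Once this matching is pinned down, the first statement is immediate from Proposition~\ref{prop:non-char}, and the mirroring argument trades the orientation reversal of surgery for the sign change in the crossing, handling the negative case with no further work.
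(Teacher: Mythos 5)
Your proof is correct and follows the paper's (implicit) argument: the corollary is stated without proof as an immediate consequence of Proposition~\ref{prop:non-char}, via exactly the standard fact you invoke, namely that an unknotting crossing change is realized by adding a single full twist along two oppositely oriented strands (equivalently, $\pm1$-surgery on a nullhomologous crossing circle). Your mirroring argument is the natural way to handle the second bullet, and your sign bookkeeping (positive crossing $\leftrightarrow$ positive full twist $\leftrightarrow$ slope $+1$) is consistent with the conventions in force in the proposition.
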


\begin{figure}%[!ht]
  \centerline{
    \begin{overpic}[width=0.7\textwidth]{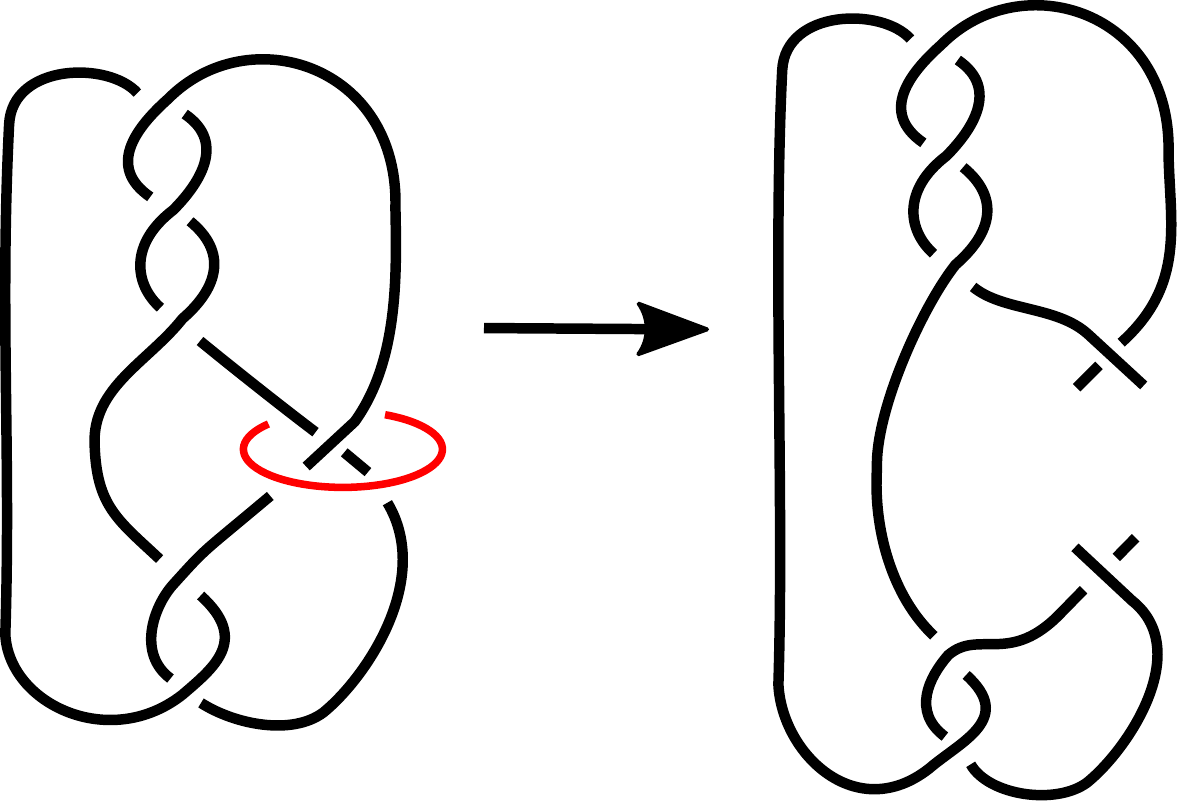}
      \put (10,0) {\LARGE $K'$}
      \put (38,31) {\LARGE $C'$}
      \put (80,-5) {\LARGE $K_q$}
      \put (37,25) {\Large $\frac{1}{q}$}
      \put (35,60) {\Large $\frac{1}{q}$}
      \put (101,60) {\Large $\frac{1}{q}$}
      \put (98,28){$\vbigl. \vbigr\}$ $2q-1$ crossings}
      \put (94,27){\LARGE $\vdots$}
    \end{overpic}
  }
  \vspace{1cm}
\caption{A link $K'\cup C'$, such that twisting along $C'$ yields the two-bridge link $K_q$. Proposition~\ref{prop:non-char} implies that $K_q$ has $\frac{1}{q}$ as a non-characterizing slope.}
\label{fig:2bridge_exam}
\end{figure}

\bibliographystyle{alpha}
\bibliography{master}
\appendix
\section{Calculating $\Delta_{T_{r,s}}''(1)$}
We conclude with a derivation of \eqref{eq:torus_deriv}. It will be convenient to define, for any positive integer $k$, the function 
\[Q_k(t)=t^{\frac{1-k}{2}}\left(\frac{t^k-1}{t-1}\right)=t^{\frac{1-k}{2}}\left(\sum_{i=0}^{k-1} t^i\right).\]
Using these, we can write the Alexander polynomial of a torus knot in the form:
\[
\Delta_{T_{r,s}}(t)=\frac{Q_{rs}(t)}{Q_{r}(t)Q_{s}(t)}.
\]
Since $Q_k(t)=Q_{k}(t^{-1})$, we have that 
\[Q_k'(1)=0.\]
Furthermore, we calculate that
\[Q_k(1)=k\]
and
\begin{align*}
Q_k''(1)&=\sum_{i=0}^{k-1} \left(i - \frac{k-1}{2}\right)\left(i - \frac{k+1}{2}\right)= \sum_{i=0}^{k-1} \left(i^2 -ki + \frac{(k-1)(k+1)}{4}\right)\\
&=\frac{k(k-1)(2k-1)}{6}- \frac{k^2(k-1)}{2}+ \frac{k(k-1)(k+1)}{4}\\
%&=\frac{k(k-1)(k+1)}{12}\\
&=\frac{k(k^2-1)}{12}.
\end{align*}
These identities allow us to calculate $\Delta_{T_{r,s}}''(1)$ implicitly. Differentiating the identity 
\[Q_{r}(t)Q_{s}(t)\Delta_{T_{r,s}}(t)=Q_{rs}(t)\]
twice and evaluating at $t=1$, we obtain
\begin{align*}
Q_{rs}''(1)&=\frac{rs(r^2s^2-1)}{12}\\
&= \left(Q_{r}(1)Q_{s}(1)\right)''\Delta_{T_{r,s}}(1)+2\left(Q_{r}(1)Q_{s}(1)\right)'\Delta_{T_{r,s}}'(1)+ Q_{r}(1)Q_{s}(1)\Delta_{T_{r,s}}''(1)\\
&=Q_{r}''(1)Q_{s}(1)+2Q_{r}'(1)Q_{s}'(1)+ Q_{r}(1)Q_{s}''(1)+rs\Delta_{T_{r,s}}''(1)\\
&=\frac{rs(r^2-1)}{12}+\frac{rs(s^2-1)}{12}+rs\Delta_{T_{r,s}}''(1).
\end{align*}
%\left(Q_{1}(t)Q_{1}(t)\right)''+ Q_{r}(1)Q_{s}(1)\Delta_{T_{r,s}}''(1)= Q_{rs}''(1)
%\]
From this one rearranges to obtain the desired formula:
\[
\Delta_{T_{r,s}}''(1)=\frac{(r^2-1)(s^2-1)}{12}.
\]
\end{document}